\documentclass[12pt,reqno]{article}
\usepackage{graphicx,indentfirst}
\usepackage{amsmath,amssymb,mathrsfs}
\usepackage{amsthm,amscd}
\usepackage{verbatim}
\usepackage{appendix}
\usepackage{enumitem,titletoc}
\usepackage[utf8]{inputenc}
\usepackage{imakeidx}
\makeindex[columns=2, title=Alphabetical Index]
\usepackage{fancyhdr}
\usepackage{amsfonts,color}
\usepackage[all]{xy}
\usepackage{tikz-cd}
\usepackage{syntonly}
\usepackage{fancyhdr}
\usepackage{array}
\linespread{1}
\usepackage[left=2.5cm,right=2.5cm,bottom=3cm,top=3cm]{geometry}

\usepackage{tikz}
\usepackage{extarrows}
\usepackage{hyperref}


%

\def\XXint#1#2#3{{\setbox0=\hbox{$#1{#2#3}{\int}$ }
\vcenter{\hbox{$#2#3$ }}\kern-.6\wd0}}

\newtheorem{claim}{Claim}
\newtheorem{lem}{Lemma}
\newtheorem{prop}{Proposition}[section]

\newtheorem{corr}{Corollary}[section]

\newtheorem{remark}{Remark}
\newtheorem{theorem}{Theorem}[section]

\allowdisplaybreaks

\newcommand{\eps}{\varepsilon}

\newcommand{\ssubset}{\subset\joinrel\subset}

\makeindex

\hypersetup{
    colorlinks=true,
    citecolor=green,
    filecolor=green,
    linkcolor=blue,
    urlcolor=black
}

\numberwithin{equation}{section}

\title{Generalized Monge-Amp\`ere functionals and related variational problems}
\author{\footnote{F.T. is supported by the Center for Mathematical Sciences and Applications at Harvard University.} Freid Tong and Shing-Tung Yau}
\date{\today}
\newcommand{\Addresses}{{
		\bigskip
		\footnotesize
		\textsc{Center for Mathematical Sciences and Applications, Harvard University, 20 Garden St, Cambridge, MA 02138}\par\nopagebreak
		\textit{E-mail address}: \texttt{ftong@cmsa.fas.harvard.edu}\\
		\textsc{Department of Mathematics, Tsinghua University, Haidian District, Beijing 100084, China}\par\nopagebreak
		\textit{E-mail address}: \texttt{styau@tsinghua.edu.cn}
}}
\begin{document}

\maketitle
\begin{abstract}
In this paper, we introduce a family of real Monge-Amp\`ere functionals and study their variational properties. We prove a Sobolev type inequality for these functionals and use this to study the existence and uniqueness of some associated Dirichlet problems. In particular, we prove the existence of solutions for a nonlinear eigenvalue problem associated to this family of functionals. 
\end{abstract}

\section{Introduction}
The Monge-Amp\`ere equation is one of the most important nonlinear PDEs in geometry. It arises very naturally in geometric problems in affine geometry \cite{Calabi, Cheng-Yau4, Cheng-Yau3}, the classical Minkowski problem \cite{Pogorelov, Nirenberg, Cheng-Yau2}, and is also related to K\"ahler geometry via the complex Monge-Amp\`ere equation \cite{Yau, Cheng-Yau4}. The following Dirichlet problem for Monge-Amp\`ere equation is a subject that has been widely studied. 
\begin{equation}\label{eq: MA-equation}
\begin{cases}
\det D^2u = F(x, u, \nabla u) &\text{ in }\Omega\\
u = 0 &\text{ on }\partial\Omega. 
\end{cases}
\end{equation}
 Here we assume the function $F(x, u, \nabla u)$ is positive, and we would like to obtain convex solutions $u$. The equation \eqref{eq: MA-equation} was first studied by Pogorelov \cite{Pogorelov2}. Cheng and the second author solved \eqref{eq: MA-equation} when $F$ is independent of $\nabla u$ and the sign of $F_u$ is positive \cite{Cheng-Yau}, in that case the solutions are unique by the maximum principle. In that work, $F$ is allowed to become singular near the boundary, and the solutions are obtained in $C^{\infty}(\Omega)\cap C(\overline{\Omega})$. Under further regularity conditions for $F$, Caffarelli-Nirenberg-Spruck \cite{CNS}, and Krylov \cite{Krylov} independently obtained boundary estimates of all order for \eqref{eq: MA-equation}, and hence was able to obtain the smoothness of solutions up to the boundary.  

When the sign of $F_u$ is not necessarily positive, Caffarelli, Nirenberg and Spruck \cite{CNS} solved the Dirichlet problem \eqref{eq: MA-equation} under the assumption of the existence of a {\em subsolution}. Although their result is very natural from the PDE perspective, often constructing such a subsolution is a difficulty task, one that is almost as difficult as constructing a solution itself. A different approach without constructing a subsolution was taken by Tso \cite{Tso}, he used a variational approach to study such problems using the Monge-Amp\`ere functional introduced by Bakelman \cite{Bakelman, Bakelman2}. In particular, Tso studied the family of Dirichlet problems
\begin{equation}
\begin{cases}
\det D^2u  = (-u)^p&\text{ in }\Omega\\
u = 0&\text{ on }\partial \Omega,
\end{cases}
\end{equation}
and he proved the existence and uniqueness of solutions when $p< n$ (subcritical case), and the existence of nontrivial solutions when $p>n$ (supercritical case). The case $p=n$ is the Monge-Amp\`ere eigenvalue problem and has been studied in earlier work of Lions \cite{Lions}. The variational approach turns out to be very natural and has been extended to the study of more general Hessian equations in \cite{Wang, Chou-Wang}. 

One feature of using the classical Monge-Amp\`ere functional is that it works well when $F$ depends on $u$ and not on its gradient $\nabla u$. However, for more general $F$ that depends on both $u$ and $\nabla u$, one does not always expect to have a variational formulation. In this paper, we study the solvability of a class of Dirichlet problems whose right-hand side depend on $u$ and on its gradient $\nabla u$ in terms of its Legendre tranform $u^{\star} = \langle x, \nabla u\rangle-u$. Such equations arises naturally from the study of affine spheres \cite{Chen-Huang, Klartag}, and similar equations also arises in the recent work of Collins-Li \cite{Collins-Li} on a generalization of a construction of Tian-Yau for complete Calabi-Yau metrics. We discover a variational structure for this class of equations, and use this to undertake a variational study of such equations. In particular, we define a family of {\em generalized Monge-Amp\`ere functionals} $H_{n+k}$ for any $k\in\mathbb R$, whose variations are given by
\[\delta H_{n+k}(u)[\phi] = - \int_\Omega \phi (u^{\star})^k\det D^2u .\]
The functional themselves are of independent interest, and one of them in particular suggests a close relationship to the study of convex geometry (see Theorem~\ref{thm: -n-1 case}). 

An important ingredient in our variational approach is a {\em Sobolev type inequality} (Theorem~\ref{thm: sobolev-inequality}) for $H_{n+k}$, which says that the functionals $H_{n+k}$ bound the $L^{n+k+1}$ norm of a convex function. The complex analogue of such Sobolev type inequalities and the limiting Trudinger type inequalities are of great importance in the study of complex Monge-Ampere equations, and sharp forms of the limiting Trudinger type inequalities for the complex Monge-Ampere equation have been established and used by Guo, Phong and the first author to prove sharp estimates for complex Monge-Ampere equations on compact K\"ahler manifolds \cite{Guo-Phong-Tong}. In this paper, we will use the Sobolev inequality for $H_{n+k}$ to study the solvability of the family of Dirichlet problems
\begin{equation}\label{eq: main-equations}
\begin{cases}
	\det D^2u  = \lambda(u^{\star})^{-k}(-u)^p&\text{ in }\Omega\\
	u = 0&\text{ on }\partial \Omega. 
\end{cases}
\end{equation}
The cases $p<n+k$ (subcritical), and $p>n+k$ (supercritical) are treated seperately, and the case $p=n+k$ is treated as an eigenvalue problem. Our main theorem is the following
\begin{theorem}\label{thm: main-thm}
	Let $n+k>0$ and consider the Dirichlet problem \eqref{eq: main-equations}. Then
	\begin{enumerate}
		\item If $0<p<n+k$, then for any $\lambda>0$,  \eqref{eq: main-equations} admits a unique nontrivial solution $u\in C^{2, \alpha}(\overline\Omega)\cap C^{\infty}(\Omega)$. 
		\item If $p= n+k$, then there exists a unique $\lambda$ for which \eqref{eq: main-equations} admits a non-trivial solution $u\in C^{2, \alpha}(\overline\Omega)\cap C^{\infty}(\Omega)$. Moreover, the solution is unique up to scaling by a positive constant. 
		\item If $p> n+k$, then for any $\lambda>0$, \eqref{eq: main-equations} admits a nontrivial solution $u\in C^{2, \alpha}(\overline\Omega)\cap C^{\infty}(\Omega)$. 
	\end{enumerate}
\end{theorem}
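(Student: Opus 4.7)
By the variation formula for $H_{n+k}$ stated in the introduction, solutions of \eqref{eq: main-equations} are precisely the critical points of the functional
\[
J_\lambda(u) = H_{n+k}(u) - \frac{\lambda}{p+1}\int_\Omega (-u)^{p+1}\,dx
\]
on the cone $\mathcal{K}(\Omega)$ of convex functions in $C(\overline\Omega)$ vanishing on $\partial\Omega$. Under the rescaling $u\mapsto tu$ with $t>0$, $H_{n+k}$ is homogeneous of degree $n+k+1$ while $\int(-u)^{p+1}\,dx$ is homogeneous of degree $p+1$; this mismatch is the source of the trichotomy in the theorem.

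In all three cases I would carry out the constrained minimization
\[
\Lambda_p := \inf\Big\{H_{n+k}(u): u \in \mathcal{K}(\Omega),\ \int_\Omega(-u)^{p+1}\,dx = 1\Big\}.
\]
The Sobolev inequality of Theorem~\ref{thm: sobolev-inequality} supplies $\|u\|_{L^{n+k+1}}^{\,n+k+1} \le C\,H_{n+k}(u)$, while a geometric argument using convexity and $u|_{\partial\Omega}=0$ supplies $\int(-u)^{p+1}\,dx \asymp \|u\|_{L^\infty}^{p+1}$. Together these give $\Lambda_p>0$ and a uniform $L^\infty$ bound on any minimizing sequence; convexity then upgrades this to uniform Lipschitz bounds on compact subsets of $\Omega$, and the Blaschke selection theorem extracts a locally uniform limit $u_*$. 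Weak continuity of the Monge-Amp\`ere measure provides the lower semicontinuity of $H_{n+k}$ required to promote $u_*$ to a minimizer, which satisfies the Euler-Lagrange equation $\det D^2 u_* = \mu (u_*^{\star})^{-k}(-u_*)^p$ for some Lagrange multiplier $\mu>0$.

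To conclude: when $p\ne n+k$, the rescaling $u_*\mapsto tu_*$ transforms the multiplier into $t^{\,p-n-k}\mu$, which ranges over $(0,\infty)$, so any prescribed $\lambda>0$ is realized, yielding the existence in parts (1) and (3). When $p=n+k$, $\mu$ is scale-invariant and is uniquely determined as $\lambda$, giving the eigenvalue in part (2). The regularity $u\in C^{2,\alpha}(\overline\Omega)\cap C^\infty(\Omega)$ is then obtained from Caffarelli's interior regularity together with the Caffarelli--Nirenberg--Spruck boundary estimates, once the right-hand side of \eqref{eq: main-equations} is checked to be bounded and H\"older continuous with a positive lower bound on compacts. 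For uniqueness, in the critical case a Picone-type identity applied to two eigenfunctions forces them to be proportional; in the subcritical case, I would adapt Tso's argument to show that a suitable reparametrization renders $J_\lambda$ strictly convex, so that two critical points at the same energy must coincide.

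The main obstacle I expect is the compactness and lower semicontinuity step for the constrained minimization with the $(u^{\star})^k$ weight: since $u^{\star}$ encodes gradient information and can degenerate near $\partial\Omega$ where $\nabla u$ may blow up, preserving the boundary condition in the limit and controlling the Legendre factor require care in both the variational step and the boundary regularity theory. The uniqueness statement in part (1) is likewise delicate, since the strict convexity argument of Tso for the classical case $k=0$ must be reworked to accommodate the gradient-dependent factor $(u^{\star})^{-k}$.
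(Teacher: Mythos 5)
Your constrained-minimization plan is a genuinely different route from the paper. The paper never carries out a direct minimization: for $p<n+k$ it runs the parabolic gradient flow \eqref{eq: parabolic-equation} for the unconstrained functional $\mathcal J$ and extracts a limit along a subsequence using the parabolic estimates of Theorem~\ref{thm: estimate-parabolic}; for $p>n+k$ it performs a mountain-pass argument along the same flow (Theorem~\ref{thm: supercritical}); and for $p=n+k$ it uses a continuity method in the parameter $s$ of the auxiliary family $(u^{\star})^k\det D^2u=(1-su)^{n+k}$, identifying the blow-up parameter $\tilde s$ with the eigenvalue $\underline\lambda_{n+k+1}$. The decisive advantage of the flow is that it produces a sequence of \emph{smooth} approximate critical points with uniform $C^{k,\alpha}$ bounds, so the convergence is strong and one never has to confront the lower-semicontinuity question for $H_{n+k}$. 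You correctly flag that lower semicontinuity/compactness is the main obstacle in your plan, but that obstacle is exactly what the paper's architecture is designed to sidestep: for general convex limits the Legendre factor $(u^{\star})^k$ only makes pointwise sense a.e.\ and interacts badly with weak convergence of Monge--Amp\`ere measures near $\partial\Omega$, and I do not see how to push through the semicontinuity or the subsequent regularity of a merely weak minimizer without essentially reconstructing the paper's a priori estimates. So the minimization plan is plausible in outline but the heart of the proof is missing.

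There is also a concrete error in your uniqueness strategy for part (1). You propose to make $J_\lambda$ strictly convex after a reparametrization, \`a la Tso. But the second variation formula
\[
\delta^2 H_{n+k}(u)[\phi, \phi]=\int_\Omega u^{ij}\phi_i\phi_j(u^{\star})^k\det D^2u +k\int_\Omega \phi^2(u^{\star})^{k-1}\det D^2 u
\]
shows that $H_{n+k}$ is \emph{not} convex when $k<0$, and the triangle inequality for $\|\cdot\|_{H_{n+k}}$ is only established in the paper for $k\ge 0$. Since Theorem~\ref{thm: main-thm} allows any $k>-n$, a convexity-based uniqueness argument cannot cover the whole stated range. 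The paper's uniqueness proof (Theorem~\ref{thm: uniqueness}) is instead a sliding/maximum-principle argument: rescale one solution by the largest $t_0$ with $t_0|u_1|\le|u_2|$, use concavity of $M\mapsto(\det M)^{1/n}$ to obtain a differential inequality for $u_2-t_0u_1$, and then conclude by the strong maximum principle together with a nontrivial Hopf-lemma variant at degenerate boundary points where $\det D^2\tilde u_1$ vanishes. You would need some argument of this type, not strict convexity, to handle negative $k$. (Two smaller points: the multiplier under $u\mapsto tu$ scales as $t^{\,n+k-p}\mu$, not $t^{\,p-n-k}\mu$ --- harmless here --- and the $C^{2,\alpha}(\overline\Omega)$ boundary regularity in the degenerate regime is obtained in the paper from the Savin and Le--Savin localization/Schauder theory, not from Caffarelli--Nirenberg--Spruck, since the right-hand side vanishes at $\partial\Omega$.)
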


This paper is organized as follows. In Section~\ref{section: functionals}, we will define the functionals $H_{n+k}$ and establish their basic properties. In Section~\ref{section: gradient flow}, we study a parabolic gradient flow of functionals associated with $H_{n+k}$ and establish some basic apriori estimates for these equations. In section~\ref{section: dirichlet problems}, we will use the parabolic gradient flow to prove convergence results and establish existence and uniqueness for the associated Dirichlet problems. In particular, we will prove Theorem~\ref{thm: main-thm}.  Finally in section~\ref{section: tranformation}, we discuss a transformation which transforms the solution of the Dirichlet problem to a solution of an optimal transport problem and discuss some relations to other works. 
\section{The generalized Monge-Amp\`ere functionals}\label{section: functionals}
Let us fix $ \Omega\subset \mathbb R^n$ to be an open, bounded, smooth, and strictly convex domain in $\mathbb R^n$, and we will always assume the origin is contained in $\Omega$. Let $u\in C^\infty(\overline\Omega)$ be a strictly convex function on $\overline\Omega$ which vanishes on the boundary, and we denote $\mathcal C_0$ to be the space of all such functions, 
\[\mathcal C_0 = \{u\in C^\infty(\overline\Omega): D^2u>0, u|_{\partial\Omega} = 0\}. \]
For $u\in \mathcal C_0$, we denote $u^{\star}$ to be the function
\[u^{\star}(x):=\langle x, \nabla u(x)\rangle-u(x),\] 
which is the legendre transform of $u$ evaluated at the point $\nabla u(x)$. We note that $u^{\star}$ is nonnegative since $\inf_{x\in \Omega}u^{\star}(x) = u^{\star}(0) = -u(0)\geq 0$, and $u^{\star}$ is strictly positive unless $u \equiv 0$.

For any number $k\in \mathbb R$, we can define a {\bf generalized Monge-Amp\`ere functional} $H_{n+k}$ by
\begin{equation}\label{def: functional-H}
H_{n+k}(u) := \frac{1}{n+k+1}\int_{\Omega}(u^{\star})^{k}(-u)\det D^2u. 
\end{equation}
\begin{remark}
	Notice that $H_{n+k}(u)$ is always positive for $u\in \mathcal C_0$ since $u^{\star}\geq |u(0)|>0$ for $u\in \mathcal C_0$, and it vanishes only when $u$ is identically zero. 
\end{remark}

When $k=0$, $H_n$ reduces to the classical Monge-Amp\`ere functional first introduced by Bakelman \cite{Bakelman, Bakelman2}, and has been widely studied \cite{Cheng-Yau, Tso}. Other cases of interest include $k = -n-2$, which is related to the equation for  an elliptic affine spheres \cite{Calabi, Klartag}, and the case of $k\geq 0$ is closely related to a class of equations studied in \cite{Chen-Huang}, arising from the study of hyperbolic affine spheres. We shall see below that the scale-invariant case of $k = -n-1$ will turn out to be very special. 

\subsection{Variational formulas}
We compute the first and second variations of $H_{n+k}$. The first variation of $H_{n+k}$ with respect to $\phi\in C^\infty(\overline\Omega)$ satisfying $\phi|_{\partial \Omega}=0$ is given by

\begin{align}\label{eq: variation-of-H}
\delta H_{n+k}(u)[\phi]& = -\frac{1}{n+k+1}\int_{\Omega}\phi(u^{\star})^k\det D^2u+\frac{k}{n+k+1}\int_\Omega (\langle x, \nabla \phi\rangle-\phi)(u^{\star})^{k-1}(-u)\det D^2u\\
& \qquad +\frac{1}{n+k+1}\int_\Omega (u^{\star})^k(-u)u^{ij}\phi_{ij}\det D^2 u\\
& =  -\frac{1}{n+k+1}\int_{\Omega}\phi(u^{\star})^k\det D^2u+\frac{k}{n+k+1}\int_\Omega (\langle x, \nabla \phi\rangle-\phi)(u^{\star})^{k-1}(-u)\det D^2u\\
& \qquad -\frac{k}{n+k+1}\int_\Omega (u^{\star})^{k-1}(-u)\langle x, \nabla \phi\rangle\det D^2 u+\frac{1}{n+k+1}\int_\Omega (u^{\star})^ku^{ij}\phi_iu_j\det D^2u\\
& =  -\frac{1}{n+k+1}\int_{\Omega}\phi(u^{\star})^k\det D^2u-\frac{k}{n+k+1}\int_\Omega \phi(u^{\star})^{k-1}(-u)\det D^2u\\
& \qquad -\frac{n}{n+k+1}\int_\Omega \phi(u^{\star})^k\det D^2u-\frac{k}{n+k+1}\int_\Omega \phi(u^{\star})^{k-1}\langle x, \nabla u\rangle\det D^2u\\
& = -\int_\Omega\phi (u^{\star})^k\det D^2u\,dx
\end{align}

The second variation with respect to $\phi, \psi\in C^\infty(\overline\Omega)$, satisfying $\phi|_{\partial\Omega}=\psi|_{\partial\Omega}=0$ is given by
\begin{align}\label{eq: second-variation-H}
\delta^2 H_{n+k}(u)[\phi, \psi]
& =\int_\Omega u^{ij}\phi_i\psi_j(u^{\star})^k\det D^2u 
+k\int_\Omega \phi\psi(u^{\star})^{k-1}\det D^2 u. 
\end{align}
From this, we can see that if $k\geq 0$, then this norm is always positive definite, hence $H_{n+k}$ is convex on $\mathcal C_0$. Using the functionals $H_{n+k}$, we may define a norm $\|\cdot\|_{H_{n+k}}$ on the space $\mathcal C_0$ by setting
\begin{equation}\label{def: H-norm}
\|u\|_{H_{n+k}} := [H_{n+k}(u)]^{\frac{1}{n+k+1}}
\end{equation}
\begin{prop}
	If $k\geq 0$, then for $u, v\in \mathcal C_0$, the triangle inequality holds
	\[\|u+v\|_{H_{n+k}} \leq \|u\|_{H_{n+k}}+\|v\|_{H_{n+k}}.\]
\end{prop}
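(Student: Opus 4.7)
The plan is to recognize the claim as a Minkowski-type inequality that drops out of two structural features of $H_{n+k}$ on the convex cone $\mathcal C_0$: positive homogeneity and convexity. Since $(tu)^{\star} = t u^{\star}$ and $\det D^2(tu) = t^n \det D^2 u$ for $t>0$, the defining formula \eqref{def: functional-H} immediately gives
\[H_{n+k}(tu) = t^{n+k+1} H_{n+k}(u),\]
so $\|\cdot\|_{H_{n+k}}$ is positively $1$-homogeneous. For the convexity, one uses that $\mathcal C_0$ is closed under convex combinations (strict convexity, smoothness and vanishing on $\partial\Omega$ are all preserved) together with the second variation formula \eqref{eq: second-variation-H}: applied along the straight-line path $u_t = (1-t)u + tv$ with tangent $v-u$ (which itself vanishes on $\partial\Omega$), both the term $\int u_t^{ij}(v-u)_i(v-u)_j (u_t^{\star})^k \det D^2 u_t$ and the term $k\int (v-u)^2 (u_t^{\star})^{k-1}\det D^2 u_t$ are nonnegative when $k\geq 0$, so $t\mapsto H_{n+k}(u_t)$ has a nonnegative second derivative and is therefore convex on $[0,1]$.

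Given these two facts the triangle inequality is a formal consequence. Set $a = \|u\|_{H_{n+k}}$ and $b = \|v\|_{H_{n+k}}$, which one may assume are both positive (otherwise the inequality is trivial). By homogeneity, the rescalings $u/a$ and $v/b$ both have unit $H_{n+k}$-norm, and
\[\frac{u+v}{a+b} = \frac{a}{a+b}\cdot \frac{u}{a} + \frac{b}{a+b}\cdot \frac{v}{b}\]
is a convex combination inside $\mathcal C_0$. Convexity of $H_{n+k}$ then gives
\[H_{n+k}\!\left(\frac{u+v}{a+b}\right) \leq \frac{a}{a+b}\, H_{n+k}(u/a) + \frac{b}{a+b}\, H_{n+k}(v/b) = 1,\]
and undoing the $1$-homogeneity yields $H_{n+k}(u+v) \leq (a+b)^{n+k+1}$, i.e.\ $\|u+v\|_{H_{n+k}} \leq a+b$, as desired.

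No real obstacle arises. The only point worth verifying carefully is convexity of the scalar function $t\mapsto H_{n+k}(u_t)$ on $[0,1]$, as opposed to merely the pointwise nonnegativity of $\delta^2 H_{n+k}$ recorded in \eqref{eq: second-variation-H}; but as noted above, the fact that the quadratic form \eqref{eq: second-variation-H} is manifestly nonnegative for every $u_t \in \mathcal C_0$ when $k \geq 0$, combined with the segment lying inside $\mathcal C_0$, makes this immediate.
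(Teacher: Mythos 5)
Your argument is correct, and it is genuinely simpler than the one in the paper, though both exploit the same underlying structure. The paper first rewrites $H_{n+k}$, $\delta H_{n+k}$, and $\delta^2 H_{n+k}$ via integration by parts in a matched bilinear-form framework and then applies Cauchy--Schwarz to obtain the refined inequality
\[
\delta^2 H_{n+k}(u)[v,v]\cdot H_{n+k}(u)\ \geq\ \frac{n+k}{n+k+1}\bigl(\delta H_{n+k}(u)[v]\bigr)^2,
\]
which (after dividing through) is precisely the statement that $t\mapsto \|u+tv\|_{H_{n+k}}$ has nonnegative second derivative, i.e.\ that the norm itself is convex, and hence subadditive by $1$-homogeneity. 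You bypass the integration-by-parts identities and Cauchy--Schwarz entirely: you only need the raw positivity of the quadratic form $\delta^2 H_{n+k}$ for $k\geq 0$ (which the paper already notes in passing), together with the $(n+k+1)$-homogeneity of $H_{n+k}$ on the convex cone $\mathcal C_0$, and then invoke the standard Minkowski-gauge argument. What the paper's route buys is the sharper quantitative inequality above (a kind of reverse Cauchy--Schwarz for the Hessian metric), which records more than bare convexity of $H_{n+k}$; what your route buys is economy, since the conclusion is a purely formal consequence of homogeneity plus convexity. Both are valid.
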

\begin{proof}
	By applying integration by parts to formulas \eqref{def: functional-H} and \eqref{eq: variation-of-H}, we  obtain
	\[H_{n+k}(u) = \frac{1}{(n+k)(n+k+1)}\left(\int_\Omega u^{ij}u_iu_j(u^{\star})^{k}\det D^2u+k\int_\Omega(-u)^2(u^{\star})^{k-1}\det D^2u\right)\]
	and
	\[\delta H_{n+k}(u)[\phi] = \frac{1}{n+k}\left(\int_\Omega u^{ij}u^i\phi_j(u^{\star})^{k}\det D^2u+k\int_\Omega(-\phi)(-u)(u^{\star})^{k-1}\det D^2u\right)\]
	and we also have
	\[\delta^2 H_{n+k}(u)[\phi, \psi] 
	=\int_\Omega u^{ij}\phi_i\psi_j(u^{\star})^k\det D^2u\,dx 
	+k\int_\Omega \phi\psi(u^{\star})^{k-1}\det D^2 u\,dx. \]
	Therefore by the Cauchy Schwarz inequality, we obtain
	\[\delta^2H_{n+k}(u)[v, v]\cdot H_{n+k}(u)\geq \frac{n+k}{n+k+1}\left(\delta H(u)[v]\right)^2,\]
	from which the triangle inequality follows. 
\end{proof}
\subsection{Sobolev type inequalities}
The functional $H_{-1}$ (suitably normalized) turns out to be very special. In fact, it turns out to be independent of $u$. 
\begin{theorem}\label{thm: -n-1 case}
    For any $u\in \mathcal C_0$, we have
    \[\int_\Omega \frac{-u\det D^2u}{(u^{\star})^{n+1}} = |\Omega^{\circ}|, \]
    where $\Omega^{\circ}$ the {\bf polar body} of $\Omega$, defined by
    \[\Omega^{\circ} := \{x: \langle x, y\rangle\leq 1 \text{ for all }y\in \Omega\}.\]
\end{theorem}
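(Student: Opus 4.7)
The identity has the shape of a change-of-variables statement, so the plan is to exhibit an explicit $C^{\infty}$ diffeomorphism $\Phi:\Omega\to\Omega^{\circ}$ whose Jacobian is exactly the integrand. The natural candidate, suggested by the appearance of $u^{\star}$ and $\nabla u$ together, is the polarity map
\[\Phi(x):=\frac{\nabla u(x)}{u^{\star}(x)}.\]

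The first step is to check $\Phi(\overline{\Omega})\subset\overline{\Omega^{\circ}}$. Convexity of $u$ gives $\langle\nabla u(x),y-x\rangle\leq u(y)-u(x)$ for all $y\in\overline{\Omega}$, and since $u\leq 0$ on $\overline{\Omega}$ this rearranges to $\langle y,\nabla u(x)\rangle\leq u^{\star}(x)$, i.e.\ $\langle y,\Phi(x)\rangle\leq 1$. Moreover on $\partial\Omega$, where $u=0$, one has $\Phi(x)=\nu(x)/\langle x,\nu(x)\rangle$ with $\nu$ the outer unit normal -- the classical polarity parametrization of $\partial\Omega^{\circ}$ -- so under our smoothness and strict convexity hypotheses $\Phi$ restricts to a diffeomorphism $\partial\Omega\to\partial\Omega^{\circ}$.

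The second step is the Jacobian computation. Differentiating $\Phi$ and using $\partial_i u^{\star}=u_{ik}x^{k}$, the derivative matrix is a rank-one perturbation of $(D^2u)/u^{\star}$, and the matrix-determinant lemma collapses this to
\[\det D\Phi=(u^{\star})^{-n}\det D^2u\cdot\Big(1-\frac{\langle x,\nabla u\rangle}{u^{\star}}\Big)=\frac{(-u)\det D^2u}{(u^{\star})^{n+1}},\]
which is strictly positive on $\Omega$.

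Finally, I would promote local invertibility to a global bijection. Since $\Phi:\overline{\Omega}\to\overline{\Omega^{\circ}}$ is smooth, restricts to a diffeomorphism on the boundary, and has positive Jacobian on the interior, a topological degree argument gives $\deg\Phi=1$ and hence $\Phi$ is a global $C^{\infty}$ bijection; the change of variables formula then produces the identity. I expect the main obstacle to be this last bijectivity step -- the Jacobian computation and the containment $\Phi(x)\in\Omega^{\circ}$ are essentially formal, but upgrading positive Jacobian plus boundary bijectivity to a genuine global diffeomorphism onto $\Omega^{\circ}$ is where the strict convexity of $\Omega$ (and the fact that $0\in\Omega$) must be fully exploited. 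A homotopy through $u_{t}=(1-t)u_{0}+tu$ starting from a quadratic model, for which bijectivity is transparent, would be a convenient way to make this rigorous.
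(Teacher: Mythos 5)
Your proof is correct and takes a genuinely different route from the paper's. The paper proceeds in two stages: it first specializes the variational formula \eqref{eq: variation-of-H} to $k=-n-1$ to conclude that $u\mapsto\int_\Omega(-u)\det D^2u/(u^{\star})^{n+1}$ has identically zero first variation and hence is constant on $\mathcal C_0$, and then evaluates the constant by building an explicit family $u_\eps$ with $u_\eps^{\star}\to 1$ and $\nabla u_\eps(\Omega)\to\Omega^{\circ}$, passing to the limit under the change of variables $y=\nabla u_\eps(x)$. You instead identify the integrand \emph{directly} as the Jacobian of the polarity map $\Phi(x)=\nabla u(x)/u^{\star}(x)$ and finish in one stroke. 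Your computation checks out: using $\partial_j u^{\star}=x^k u_{kj}$ one gets $D\Phi=(u^{\star})^{-1}D^2u-(u^{\star})^{-2}(\nabla u)(D^2u\,x)^{T}$, a rank-one perturbation, and the matrix-determinant lemma gives $\det D\Phi=(u^{\star})^{-n}\det D^2u\,\bigl(1-\langle x,\nabla u\rangle/u^{\star}\bigr)=(-u)\det D^2u/(u^{\star})^{n+1}$. The containment $\Phi(\Omega)\subset\mathrm{int}\,\Omega^{\circ}$ follows strictly from strict convexity, and on $\partial\Omega$ the map reduces to $\nu/\langle x,\nu\rangle$, the classical polarity diffeomorphism of boundaries. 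The step you flag as the main obstacle is indeed the only nontrivial one, but it closes cleanly: since $\Phi(\partial\Omega)=\partial\Omega^{\circ}$, a preimage set $\Phi^{-1}(z)$ for $z\in\mathrm{int}\,\Omega^{\circ}$ cannot accumulate on $\partial\Omega$, hence (being discrete by positive Jacobian) is finite; $\Phi|_{\partial\Omega}$ does not even depend on $u$ and is the composition of the Gauss map with a radial scaling, so $\deg(\Phi|_{\partial\Omega})=+1$; and because every Jacobian sign is $+1$, each $z$ has exactly one preimage. What your route buys is a self-contained argument with a direct geometric meaning, avoiding both the variational calculation and the delicate $\eps$-approximation; what the paper's route buys is economy, since the first-variation formula is already on hand for all $k$ and the ``constant plus evaluation'' structure requires no topological input.
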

\begin{proof}
By the variational formula from above, we have
\[\delta \left(\int_\Omega \frac{-u\det D^2u}{(u^{\star})^{n+1}}\right) \equiv 0\]
which implies 
\[\int_\Omega \frac{-u\det D^2u}{(u^{\star})^{n+1}} = const. \]
It suffices to show that this constant is equal to $|\Omega^{\circ}|$. Let $\hat u$ be the function 
\[\hat u(x) = \inf\left\{\frac{1-t}{t}: t>0, tx\in\Omega\right\},\] then it's easy to check that $\hat u\in C^{\infty}(\Omega\setminus \{0\})$ and $\det D^2\hat u = 0$ on $\Omega\setminus \{0\}$. Let $\phi_{\eps}:\mathbb R_{\geq 0}\to \mathbb R_{\geq 0}$ be a smooth function satisfying $\phi_{\eps}(x) = x$ for $x\in [0, 1-2\eps]$,  $\phi_{\eps}(x) = 1-\eps$ in a neighborhood of $1$, $\phi_{\eps}'>0$ and $\phi_{\eps}''<0$. Then define $u_{\eps}(x) = -\phi_{\eps}(-\hat u(x))+\eps\rho(x)$, where $\rho$ is a strictly convex defining function for $\Omega$, then one can check that the following are true,
\begin{enumerate}
	\item $\|u_{\eps}^{\star}-1\|_{L^{\infty}(\Omega)} = O(\eps)$. 
	\item $\nabla u_{\eps}(\Omega)$ is decreasing to $\Omega^{\circ}$ as $\eps\to 0$. 
	\item For $y\in \Omega^{\circ}$, let $x_{\eps}\in \Omega$ be the point such that $\nabla u_{\eps}(x_{\eps}) = y$, then  $\lim_{\eps\to 0}u_{\eps}(x_{\eps}) = -1$. 
\end{enumerate}
By a change of variables formula, we have
\begin{align}\int_\Omega\frac{-u_{\eps}\det D^2u_{\eps}}{(u_{\eps}^{\star})^{n+1}} 
&= \int_{\nabla u_{\eps}(\Omega)}\frac{-u_{\eps}(x_{\eps})}{(u_{\eps}^{\star}(x_{\eps}))^{n+1}}dy \\
&= \int_{\Omega^{\circ}}\frac{-u_{\eps}(x_{\eps})}{(u_{\eps}^{\star}(x_{\eps}))^{n+1}}dy +\int_{\nabla u_{\eps}(\Omega)\setminus \Omega^{\circ}}\frac{-u_{\eps}(x_\eps)}{(u_{\eps}^{\star}(x_{\eps}))^{n+1}}dy \end{align}
where $y\in \nabla u_{\eps}(\Omega)$ and $x_{\eps}\in\Omega$ are related by $\nabla u_{\eps}(x_{\eps}) = y$. The second integral converges to zero as $\eps\to 0$ since $\nabla u_{\eps}(\Omega)$ converges to $\Omega^{\circ}$. For the first term, note that both $-u_{\eps}(x_{\eps})$ and $u_{\eps}^{\star}(x_\eps)$ converges to $1$ for any $y\in \Omega^{\circ}$, therefore the first integral converges to $|\Omega^{\circ}|$ and we've proven the claim. 
\end{proof}
From Theorem~\ref{thm: -n-1 case}, we can deduce a Sobolev type inequality for $H_{n+k}$. 
\begin{theorem}\label{thm: sobolev-inequality}
    Suppose $n+k+1>0$, then there exist $c>0$ such that for any $u\in \mathcal C_0$, we have
\begin{equation}\label{ineq: sobolev-ineq}
        H_{n+k}(u)\geq c\int_\Omega |u|^{n+k+1}. 
    \end{equation}
    Moreover, $c$ depends only on $n, k$ and $\Omega$. 
\end{theorem}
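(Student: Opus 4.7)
The plan is to deduce the inequality directly from Theorem~\ref{thm: -n-1 case} by combining it with two pointwise estimates that are independent of $u$.

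First, I will use the elementary observation that $u^\star(x)\geq |u(0)|$ for all $x\in\Omega$, which follows from the Legendre-transform interpretation: $u^\star(x)=\sup_{y\in\Omega}\bigl(\langle y,\nabla u(x)\rangle - u(y)\bigr)\geq -u(0)=|u(0)|$. Since $n+k+1>0$, raising to the $(n+k+1)$th power preserves the inequality, giving $(u^\star(x))^{n+k+1}\geq |u(0)|^{n+k+1}$ pointwise on $\Omega$.

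Next, I will rewrite the functional so that the integrand from Theorem~\ref{thm: -n-1 case} appears as a factor:
\[
(n+k+1)H_{n+k}(u)=\int_\Omega (u^\star)^{n+k+1}\cdot\frac{(-u)\det D^2u}{(u^\star)^{n+1}}\,dx.
\]
Applying the pointwise bound on $(u^\star)^{n+k+1}$ and then Theorem~\ref{thm: -n-1 case} to the surviving factor yields
\[
(n+k+1)H_{n+k}(u)\geq |u(0)|^{n+k+1}\,|\Omega^{\circ}|.
\]

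The last step is to compare $|u(0)|$ with $\|u\|_{L^\infty(\Omega)}$ via a standard one-dimensional convexity argument: along the line through $0$ and the minimizer $x^{*}$ of $u$, extended until it meets $\partial\Omega$, the function $u$ restricted to this segment is convex in one variable, vanishing at the boundary endpoint and equal to $-\|u\|_\infty$ at $x^{*}$. Linear interpolation on this segment gives $|u(0)|\geq c_\Omega\|u\|_\infty$ for a positive constant $c_\Omega$ depending only on $\mathrm{dist}(0,\partial\Omega)$ and $\diam(\Omega)$. Combined with the trivial bound $\int_\Omega(-u)^{n+k+1}\,dx\leq |\Omega|\,\|u\|_\infty^{n+k+1}$, this produces the claimed inequality with the explicit constant $c=c_\Omega^{n+k+1}\,|\Omega^{\circ}|/\bigl((n+k+1)|\Omega|\bigr)$, depending only on $n$, $k$, and $\Omega$.

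I do not foresee a serious obstacle: once Theorem~\ref{thm: -n-1 case} is in hand, the argument reduces entirely to pointwise comparisons and a standard convexity lemma for convex functions vanishing on the boundary of a domain that contains the origin in its interior.
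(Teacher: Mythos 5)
Your proposal is correct and takes essentially the same route as the paper: it factors $(u^\star)^k = (u^\star)^{n+k+1}/(u^\star)^{n+1}$ so as to apply the pointwise bound $u^\star \geq |u(0)|$ (valid with $(n+k+1)$-th powers because $n+k+1>0$ and the remaining factor $(-u)\det D^2u/(u^\star)^{n+1}$ is nonnegative), invokes Theorem~\ref{thm: -n-1 case} to get $H_{n+k}(u)\geq \frac{|\Omega^\circ|}{n+k+1}|u(0)|^{n+k+1}$, and then uses the standard convexity comparison $|u(0)|\geq \frac{d(0,\partial\Omega)}{\diam(\Omega)}\|u\|_{L^\infty}$ together with $\int_\Omega |u|^{n+k+1}\leq |\Omega|\,\|u\|_{L^\infty}^{n+k+1}$. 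The only cosmetic difference is that you spell out the Legendre-transform justification for $u^\star\geq |u(0)|$, which the paper simply asserts; otherwise the argument coincides step for step.
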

\begin{proof}
By Theorem~\ref{thm: -n-1 case} and the fact that $u^{\star}(x)\geq |u(0)|$, we obtain the inequality, 
\begin{equation}\label{ineq: H-lower-bdd}
H_{n+k}(u)\geq \frac{|\Omega^{\circ}|}{n+k+1}|u(0)|^{n+k+1}. 
\end{equation}
Since $0\in \Omega$, by the convexity of $u$, we have $|u(0)|\geq \frac{d(0, \partial\Omega)}{diam(\Omega))}\|u\|_{L^{\infty}}$, and $\|u\|_{L^{\infty}}^{n+k+1}\geq c\int_\Omega |u|^{n+k+1}$. Together with \eqref{ineq: H-lower-bdd}, this proves the claim. 
\end{proof}

Using this, we can define an invariant of $\Omega$ to be the maximum of all such $c$ such that inequality~\eqref{ineq: sobolev-ineq} holds. 
\begin{equation}\label{eq: def-of-lambda-n+k+1}
    \underline\lambda_{n+k+1}(\Omega) :=\inf_{u\in \mathcal C_0}\left\{\frac{(n+k+1)H_{n+k}(u)}{\int_\Omega|u|^{n+k+1}}\right\}. 
\end{equation}
In the case $k=0$, $\underline\lambda_{n+1}$ was studied by Tso \cite{Tso}, who showed that it is essentially equal to the Monge-Amp\`ere eigenvalue introduced by Lions \cite{Lions}. We will show in section~\ref{section: dirichlet problems}, that each $\underline{\lambda}_{n+k+1}$ will correspond to the solution of a nonlinear eigenvalue problem. 

\begin{remark}
	The discussion above can be generalized to allow for more general dependence on $u^{\star}$. For a positive function $h:\mathbb R_+\to \mathbb R_+$, we denote $G(x) :=\int_0^xs^nh(s)ds$, then we can consider the functional
	\[H_h(u):= \int_{\Omega}G(u^{\star})\frac{(-u)\det D^2u}{(u^{\star})^{n+1}}. \]
	Then its variation will be given by 
	\[\delta H_h(u)[\phi] = -\int_\Omega\phi h(u^{\star})\det D^2u, \]
	and second variation is
	\[\delta^2 H_h(u)[\phi, \psi] = \int_\Omega\phi\psi h'(u^{\star})\det D^2u+\int_\Omega u^{ij}\phi_i\psi_jh(u^{\star})\det D^2u. \]
	Many of the results of this section holds for more general $h$. 
\end{remark}

\section{A parabolic gradient flow}\label{section: gradient flow}
Let $F(x, u):\overline\Omega\times \mathbb R_{\leq 0}\to \mathbb R_{\geq 0}$ be a non-negative function. In this section, we will study a gradient flow of the functional
\begin{equation}\label{def: J-functional}
\mathcal J(u) := H_{n+k}(u)-\int_{\Omega}\left(\int_u^0F(x, s)ds\right).
\end{equation}
whose variation is
\begin{equation}\label{eq: var-of-J}
\delta \mathcal J(u)[\phi] = -\int_\Omega \phi\left((u^{\star})^k\det D^2u-F(x, u)\right). 
\end{equation}

First to establish some notation, we denote $Q_T := \Omega\times (0, T]$ to be the parabolic cylinder, and $\overline Q_T = \overline\Omega\times [0, T]$ is the closure of $Q_T$ in $\mathbb R^n\times \mathbb R_{\geq 0}$. Let $\partial^{\star}\Omega := \overline Q_T\setminus Q_T$ denote the parabolic boundary, and $\Sigma := \partial\Omega\times (0, T)$ be the spacial boundary. We will define the parabolic H\"older norms
\[\|u\|_{\tilde C^{k, \alpha}(\overline Q_T)} := \sum_{2m+l\leq k}\sup_\Omega |D_t^mD_x^l u|+\sum_{2m+l=k}\sup_{x\neq y} \frac{|D_t^mD_x^l u(x)-D_t^mD_x^l u(y)|}{|x-y|^\alpha},\]
and denote $\tilde{C}^{k, \alpha}$ to be space of all functions with finite $\tilde C^{k, \alpha}$-norm. 

Let us assume for the rest of this section that $F(x, u)\geq \eta>0$ is strictly positive and denote \[g(x, u) :=\log F(x, u),\] and consider the following parabolic equation with initial condition $u_0\in \mathcal C_0$. 
\begin{equation}\label{eq: parabolic-equation}
\begin{cases}
u_t-\log\det D^2u-k\log (u^{\star}) = -g(x, u) &\text{ in }Q_T\\
u = u_0 &\text{ on } \overline\Omega\times \{0\} \\
u = 0 &\text{ on }\partial \Omega\times (0, T).  \end{cases}
\end{equation}
By the first variation formula \eqref{eq: var-of-J} of $\mathcal J$, we can see that $\mathcal J$ is non-increasing along this flow. 

In order to obtain the existence of solutions to \eqref{eq: parabolic-equation}, we will first consider the slightly more general boundary value problem
\begin{equation}\label{eq: parabolic-equation-with-boundary}
\begin{cases}
u_t-\log\det D^2u-k\log (u^{\star}) = -g(x, u) &\text{ in }Q_T\\
u = \Phi &\text{ on } \partial^{\star}Q_T   \end{cases}
\end{equation}
where we assume that the boundary data $\Phi$ can be extended to $\Phi\in \tilde C^2(\overline Q_T)$, such that $\Phi|_{\{0\}\times \overline\Omega} = u_0$ and $u_0(0)<0$.  
We assume moreover that $\Phi$ satisfy the compatibility condition 
\begin{equation}\label{eq: compatibility condition}
\Phi_t = \log \det D^2u_0+k\log u_0^{\star}-g(x, u_0) \text{ on } \{0\}\times \partial \Omega. 
\end{equation}

From now on, we will assume $n+k\geq 0$. The following theorem establishes a priori estimates for the parabolic equation \eqref{eq: parabolic-equation-with-boundary}. 
\begin{theorem}\label{thm: estimate-parabolic}
	Assume $n+k\geq 0$. Let $u\in \tilde C^4(Q_T)\cap \tilde C^2(\overline Q_T)$ satisfy equation~\eqref{eq: parabolic-equation-with-boundary}, and suppose further that
	\[u(t, 0)\leq-\eps<0 \text{ and } \|u\|_{C^0}\leq K.\]
	Then we have the estimate
	\[\|u\|_{\tilde C^2(Q_T)}\leq C\]
	for $C$ depending on $n, k, \Omega$, the initial condition $u_0$, $\eps$, $K$, $\|g\|_{C^2(\overline\Omega \times [-K, 0])}$, $\|\Phi\|_{C^k(\Sigma)}$, and $\|D_t\delta\Phi\|_{C^0(\Sigma)}$
\end{theorem}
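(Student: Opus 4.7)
The plan follows the classical hierarchy of estimates for Monge-Amp\`ere equations, adapted to the parabolic setting: first bound $u^\star$ and $\nabla u$, then bound $u_t$ by differentiating the equation in $t$, deduce a two-sided bound on $\det D^2u$ from the equation, and finally prove the $C^2$ estimate in the interior by a Pogorelov-type test function and on the boundary via the Caffarelli-Nirenberg-Spruck barrier method. Each time slice $u(\cdot, t)$ is strictly convex with $u|_\Sigma = \Phi|_\Sigma$, and this geometric structure drives all of the barrier constructions below.

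For first-order quantities, $\partial_j u^\star = x_i u_{ij}$ vanishes only at the origin, so $u^\star(t, x) \geq u^\star(t, 0) = -u(t, 0) \geq \varepsilon$. Spatial convexity gives $\sup_\Omega|\nabla u| = \sup_{\partial\Omega}|\nabla u|$, and on $\partial\Omega$ the normal derivative is sandwiched between the slopes of two barriers built from the boundary data $\Phi$, a strictly convex defining function of $\Omega$, and the bound $u(t,0) \leq -\varepsilon$, giving a uniform $c \leq u^\star \leq C$. Differentiating the equation in $t$ produces the linear parabolic equation
\[\bigl(\partial_t - u^{ij}\partial_i\partial_j\bigr)u_t - \frac{k}{u^\star}\bigl(\langle x, \nabla u_t\rangle - u_t\bigr) + g_u u_t = 0,\]
in which $u_t$ is determined on $\{t=0\}$ by the compatibility condition \eqref{eq: compatibility condition} and equals $\Phi_t$ on $\Sigma$. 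The parabolic maximum principle then yields $|u_t| \leq C$; combined with the bounds on $u^\star$ and $g$, the equation gives $0 < c \leq \det D^2u \leq C$.

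For the interior $C^2$ bound, I apply a Pogorelov-type argument to a test function of the form $W(x, t, \xi) = \eta(x,t)^\beta \exp(\alpha|\nabla u|^2/2)\, u_{\xi\xi}$, where $\eta$ is positive in $Q_T$ and vanishes on $\Sigma$ (for instance, a convex extension of $\Phi$ minus $u$, using $u \leq \Phi$ at the boundary). Then the maximum of $W$ over $\overline Q_T \times S^{n-1}$ either lies on $\{t=0\}$, where it is controlled by initial data, or at an interior parabolic point, where differentiating the equation twice in $\xi$ and applying $\partial_t - u^{ij}\partial_i\partial_j$ to $\log W$ produces the desired a priori bound on $\lambda_{\max}(D^2u)$. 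Together with the lower bound on $\det D^2u$, this controls $D^2u$ from both sides on compact parabolic subsets.

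The main obstacle is the boundary $C^2$ estimate on $\Sigma$, where Pogorelov's device does not apply. Tangential-tangential derivatives on $\Sigma$ are read off from $\Phi$ directly. For a mixed tangential-normal derivative at a point $x_0 \in \partial\Omega$, I follow Caffarelli-Nirenberg-Spruck and construct a barrier of the form $v = A(\Phi - u) + B\rho - C|x - x_0|^2$ with $\rho$ a strictly convex defining function of $\Omega$, chosen so that $\pm L(u_\tau - \Phi_\tau) \leq L v$ in a tubular neighborhood of $x_0$ for the linearized parabolic operator $L$; the parabolic maximum principle then yields a one-sided bound on $u_{\tau\nu}(x_0, t)$. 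The purely normal derivative $u_{\nu\nu}|_\Sigma$ is recovered from the two-sided bound on $\det D^2u$ by expanding the determinant in a frame adapted to $\partial\Omega$ and solving algebraically for $u_{\nu\nu}$, the other entries being already controlled. Combining the boundary and interior $C^2$ bounds gives the stated estimate on $\|u\|_{\tilde C^2(Q_T)}$.
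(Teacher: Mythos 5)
Your proposal is correct and follows essentially the same route as the paper, which gives only a one-paragraph sketch citing Tso's appendix and CNS Sections 7 for the details; you supply those details. The only structural difference is the order of the first-order estimates: the paper estimates $|u_t|$ first (the zeroth-order coefficient $k/u^\star + g_u$ in the linearized parabolic equation is already controlled by $u^\star \ge u^\star(t,0) = -u(t,0) \ge \eps$ and $\|g\|_{C^2}$, so the parabolic maximum principle applies before any gradient bound is known) and then $|\nabla u|$ via a CNS-type subsolution; you reverse these two steps, obtaining $c \le u^\star \le C$ from the gradient bound first and then bounding $u_t$. Both orderings are viable. The rest of your outline — two-sided bound on $\det D^2 u$ from the equation, Pogorelov-type interior test function with the $\exp(\alpha|\nabla u|^2/2)$ factor to absorb the extra terms coming from $k\log u^\star$ and $g(x,u)$, CNS barriers for the tangential–normal derivatives on $\Sigma$, and the algebraic recovery of $u_{\nu\nu}$ from the determinant bound — matches exactly what "combining the arguments in Tso's appendix with CNS Section 7 to handle the extra gradient terms" refers to. One small imprecision: the cutoff $\eta$ you describe as "a convex extension of $\Phi$ minus $u$" should rather be $h - u$ where $h$ is, e.g., the solution of the degenerate problem $\det D^2 h = 0$ with $h|_{\partial\Omega} = \Phi(\cdot,t)$, which dominates $u(\cdot,t)$ by the comparison principle since $\det D^2 u > 0$; a generic convex extension of the boundary data need not lie above $u$. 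This is a routine adjustment and does not affect the validity of the argument.
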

\begin{proof}
The proof of these estimates are by now standard, and we only give a sketch of the proof here. The proof proceeds by first estimating $|u_t|$, then estimating $|\nabla u|$, and finally $|\nabla ^2 u|$. The estimate for $u_t$ follows from the same arguments as in \cite[Appendix]{Tso}. The estimate for $\nabla u$ follows from the construction of a subsolution from \cite[pg 391]{CNS}. The estimates for $\nabla^2u$ follow from combining the arguments in \cite[Appendix]{Tso} with the arguments of \cite[Section 7]{CNS} to handle the extra gradient terms. 
\end{proof}

Once we have this, we can obtain higher regularity for $u$ on a forward interval of time by \cite{Krylov2} and a standard bootstrap argument using Schauder estimates. 
\begin{corr}
For any $\delta>0$, we have higher order estimates for $u$
\[\|u\|_{\tilde C^{l, \alpha}(\overline\Omega\times [\delta, T])}\leq C, \]
where $C$ depends on $n, k, l, \alpha, \delta$, $K$, $\eps$ and bounds for higher derivatives of $g$ and higher derivatives of $\Phi$ on $\partial\Omega\times [\delta, T]$, but is independent of $T$. 
\end{corr}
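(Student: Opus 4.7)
The plan is to reduce equation~\eqref{eq: parabolic-equation-with-boundary} to a uniformly parabolic, concave fully nonlinear equation once the estimates of Theorem~\ref{thm: estimate-parabolic} are in hand, and then bootstrap via parabolic Schauder estimates to arbitrary order.

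First I would verify uniform parabolicity on $\overline Q_T$. Rewriting the equation as $u_t = F(D^2u, \nabla u, u, x)$ with
\[F(M, p, z, x) = \log\det M + k\log(\langle x, p\rangle - z) - g(x, z),\]
the $\tilde C^2$ bounds from Theorem~\ref{thm: estimate-parabolic} give $|u_t|, |\nabla u|, |D^2u| \le C$ and $\|u\|_{C^0}\le K$. Since $u^\star$ attains its minimum at the origin, $u^\star \ge -u(0,t) \ge \eps$ throughout $\overline Q_T$, and $u^\star$ is bounded above by the gradient bound. Rearranging the equation yields $\log\det D^2u = u_t + g(x, u) - k\log u^\star$, which is uniformly bounded; combined with the upper bound on $D^2u$, this forces the smallest eigenvalue of $D^2u$ to be bounded below away from zero, giving uniform parabolicity. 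Moreover, $M\mapsto \log\det M$ is concave on positive definite matrices, and the lower-order dependence on $(p, z, x)$ is smooth on the region cut out by our a priori bounds.

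Second, I would apply Krylov's interior-in-time $\tilde C^{2, \alpha}$ estimate \cite{Krylov2} for concave fully nonlinear uniformly parabolic equations. On any subcylinder $\overline\Omega\times[\delta, T]$ this yields, together with a boundary $\tilde C^{2,\alpha}$ estimate at $\Sigma$ (which uses the smoothness of $\partial\Omega$, the $\tilde C^2$ bounds already obtained up to the boundary, and the assumed regularity of $\Phi|_{\Sigma}$), an estimate $\|u\|_{\tilde C^{2,\alpha}(\overline\Omega\times[\delta, T])}\le C$. The positive time shift $\delta$ is what frees the constant from any dependence on the regularity of $\Phi$ at $t=0$ beyond what appears in Theorem~\ref{thm: estimate-parabolic}; crucially, since Krylov's estimate is local in the time direction, the constant is independent of $T$.

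Third, I would bootstrap: differentiating equation~\eqref{eq: parabolic-equation-with-boundary} in $x_i$ and setting $v = \partial_{x_i}u$, the function $v$ satisfies a \emph{linear} uniformly parabolic equation with $\tilde C^\alpha$ coefficients (the coefficients involving $(u^{ij})$, derivatives of $g$, and terms from differentiating $\log u^\star$). Parabolic Schauder estimates upgrade $v$ to $\tilde C^{2, \alpha}$, i.e.\ $u\in \tilde C^{3, \alpha}$, on a slightly smaller time interval; iterating on a nested sequence of time intervals shrinking to $[\delta, T]$ gives $\tilde C^{l, \alpha}$ estimates for every $l$, with constants depending on higher derivatives of $g$ and of $\Phi|_{\Sigma}$ on $[\delta, T]$ but still independent of $T$. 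The main potential obstacle is verifying the boundary $\tilde C^{2, \alpha}$ portion of the Krylov step cleanly, but given the smoothness and strict convexity of $\partial\Omega$ together with the full $\tilde C^2(\overline Q_T)$ control already established, this reduces to a direct application of the known boundary theory.
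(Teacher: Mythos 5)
Your proposal is correct and follows exactly the route the paper takes: once the $\tilde C^2$ bounds from Theorem~\ref{thm: estimate-parabolic} force $\det D^2u$ to be pinched, the equation is uniformly parabolic and concave, so Krylov's interior-in-time $\tilde C^{2,\alpha}$ estimate \cite{Krylov2} applies on $\overline\Omega\times[\delta,T]$, after which a standard Schauder bootstrap on differentiated (linear) equations yields all higher orders. The paper leaves this as a one-line remark; your write-up simply supplies the details, correctly noting the lower bound $u^\star\geq\eps$ and the $T$-independence coming from the locality in time of Krylov's estimate.
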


The estimates in Theorem~\ref{thm: estimate-parabolic} can be used to obtain short-time solution to the Dirichlet problems \eqref{eq: parabolic-equation}. 

\begin{prop}[Short-time existence]\label{prop: short-time-existence}
	Assume $n+k\geq 0$. Then for any initial $u_0\in \mathcal C_0$, there exist a unique maximal time $T\in (0, \infty]$ and a unique solution $u(t, x)\in \tilde C^{1, 1}(\overline Q_T)\cap C^{\infty}(\overline\Omega\times (0, T])$ of \eqref{eq: parabolic-equation} in $Q_T$, such that
	\begin{enumerate}
		\item Either $T = \infty$ or $\limsup_{t\to T}\|u(t, \cdot)\| = \infty$. 
		\item For any $t>0$, we have
		\[\mathcal J(u(t, \cdot))\leq  \mathcal J(u_0). \]
	\end{enumerate}
\end{prop}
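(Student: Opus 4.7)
I would treat \eqref{eq: parabolic-equation} as a fully nonlinear parabolic equation and linearize around $u_0$. Since $D^2 u_0 > 0$ and $u_0^\star \geq -u_0(0) > 0$ on $\overline\Omega$, the linearization at $u_0$ is the strictly parabolic operator
\[\phi\mapsto \phi_t - u_0^{ij}\phi_{ij} - \frac{k}{u_0^\star}\bigl(\langle x,\nabla\phi\rangle-\phi\bigr) + g_u(x,u_0)\phi,\]
with smooth bounded coefficients. Standard linear parabolic Schauder theory, combined with an inverse function theorem (or contraction mapping) in appropriate parabolic H\"older spaces, produces for some small $\tau>0$ a unique classical solution $u\in \tilde C^{1,1}(\overline Q_\tau)\cap C^\infty(\overline\Omega\times(0,\tau])$. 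The corner incompatibility at $\partial\Omega\times\{0\}$ (where $u_0=0$ but the right-hand side of \eqref{eq: compatibility condition} need not vanish) is handled by choosing boundary data $\Phi$ which agrees with $u_0$ at $t=0$, satisfies \eqref{eq: compatibility condition}, and is then deformed back to $0$ on $\partial\Omega\times[\tau/2,\tau]$; this regularity loss at $t=0$ is precisely why only $\tilde C^{1,1}$ up to the parabolic boundary is claimed. Uniqueness follows from the maximum principle applied to the difference $w=u-v$ of two solutions: using the mean-value theorem on $\log\det$, $\log u^\star$ and $g$, one sees $w$ satisfies a linear parabolic equation $w_t - a^{ij}w_{ij} - b^i w_i - cw = 0$ with bounded coefficients and $w|_{\partial^\star Q_T}=0$, so $w\equiv 0$.

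\textbf{Extension to the maximal time.} Let $T$ be the supremum of times for which a classical solution exists. Suppose $T<\infty$ and $K:=\limsup_{t\to T}\|u(t,\cdot)\|_{C^0}<\infty$; I will show this contradicts maximality. Strict convexity is preserved along the flow since $\log\det D^2u$ is only finite when $D^2u>0$, so $u(t,\cdot)\in\mathcal C_0$ for $t<T$. To apply Theorem~\ref{thm: estimate-parabolic} I need a uniform bound $u(t,0)\leq -\eps<0$. This is obtained from a barrier argument: comparing $u(t,\cdot)$ from above with a smooth strictly convex function $\underline u\in\mathcal C_0$ chosen so that $\underline u(0)=-K-1$, and using the parabolic maximum principle for the operator $u_t - \log\det D^2u - k\log u^\star + g$, gives a uniform negative upper bound for $u(t,0)$ controlled by $K$ and $\|g\|_{C^0}$. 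With $\eps$ and $K$ in hand, Theorem~\ref{thm: estimate-parabolic} and its corollary provide uniform $\tilde C^{l,\alpha}$ bounds on $\overline\Omega\times[\delta,T)$ for every $\delta>0$. Passing to the limit $t\to T$ yields $u(T,\cdot)\in\mathcal C_0$, and restarting the short-time argument produces an extension past $T$, a contradiction.

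\textbf{Energy monotonicity.} Writing the equation as $u_t=\log P - \log Q$ where $P:=(u^\star)^k\det D^2u$ and $Q:=F(x,u)$, the first variation formula \eqref{eq: var-of-J} gives, along any smooth solution,
\[\frac{d}{dt}\mathcal J(u(t,\cdot)) \;=\; \delta\mathcal J(u)[u_t] \;=\; -\int_\Omega u_t\,(P-Q)\,dx \;=\; -\int_\Omega (\log P-\log Q)(P-Q)\,dx \;\leq\; 0,\]
since $\log$ is monotone. Integrating in time gives $\mathcal J(u(t,\cdot))\leq\mathcal J(u_0)$. The main obstacle in this whole argument is the extension step, specifically the production of a uniform strict bound $u(t,0)\leq -\eps<0$ from a merely bounded $C^0$ norm; once that is secured, the a priori estimates of Section~\ref{section: gradient flow} and standard parabolic bootstrap do the rest.
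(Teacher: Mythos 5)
Your overall structure matches the paper's: short-time existence by linearization and parabolic theory (the paper simply cites Tso), an extension argument reducing the maximal-time alternative to a uniform bound $u(t,0)\leq -\eps<0$, and the energy monotonicity from the first-variation formula. The uniqueness argument via the mean value theorem and the computation of $\tfrac{d}{dt}\mathcal J$ are correct.

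The flaw is in the barrier step, which you yourself identify as the crux. You propose to bound $u(t,\cdot)$ \emph{from above} by a function $\underline u\in\mathcal C_0$ with $\underline u(0)=-K-1$; but if $u\leq\underline u$ then $u(t,0)\leq -K-1<-K$, which contradicts the standing hypothesis $\|u(t,\cdot)\|_{C^0}\leq K$. Read the other way ($\underline u\leq u$), the comparison only gives the useless lower bound $u(t,0)\geq -K-1$. Either way the construction does not produce the strictly negative \emph{upper} bound on $u(t,0)$ you need. Moreover, the threshold $\eps$ should not depend on $K$ at all: the $C^0$ bound is irrelevant to producing a strictly negative value at the origin. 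The paper's argument is a local supersolution comparison. Fix $r>0$ with $B_r(0)\subset\Omega$, and set $v=\eps(|x|^2-r^2)$. Then $\det D^2v=(2\eps)^n$ and $v^{\star}=\eps(|x|^2+r^2)$, so $(v^{\star})^k\det D^2v = 2^n\eps^{n+k}(|x|^2+r^2)^k$. Since $F\geq\eta>0$ and $n+k>0$, one can choose $\eps$ small (independent of $K$ and $t$) so that $(v^{\star})^k\det D^2v\leq\eta\leq F(x,v)$ on $B_r(0)$, i.e. $v$ is a time-independent supersolution of \eqref{eq: parabolic-equation} on $B_r(0)$. Since $v|_{\partial B_r(0)}=0\geq u$ and $v|_{t=0}\geq u_0$ is arranged by taking $\eps$ smaller if needed, the comparison principle gives $u(t,0)\leq v(0)=-\eps r^2<0$ uniformly in $t$. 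With that uniform $\eps$, Theorem~\ref{thm: estimate-parabolic} applies exactly as you describe and the rest of your extension argument goes through.

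A secondary remark: once you know $u(t,0)\leq-\eps<0$, the lower bound for $u$ on finite time intervals that you reprove in Claim-style via $|u_t|\leq Ce^{At}$ (used in the supercritical case of the paper) is not needed here --- the alternative in the proposition is stated precisely so that only blow-up of $\|u(t,\cdot)\|_{C^0}$ can obstruct extension, which is what the supersolution bound plus Theorem~\ref{thm: estimate-parabolic} delivers.
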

\begin{proof}
	Short-time existence follows from the estimates of Theorem~\ref{thm: estimate-parabolic} by standard arguments (see \cite[Theorem A]{Tso}).  Suppose now that $T$ is a finite time singularity, and let $r$ be small enough such that $B_r(0)\subset \Omega$. Since $F$ is strictly positive, there exist $\eta>0$ such that $F(x, u)\geq \eta>0$. It follows that for sufficiently small $\eps$, the function \[v = \eps (|x|^2-r^2)\] is a supersolution of \eqref{eq: elliptic-equations-F} on $B_r(0)$. By the comparison principle, we have $u(t, 0)\leq v(0)<0$. Then the estimates of Theorem~\ref{thm: estimate-parabolic} show that if $T$ is a singular time, we must have
	\[\limsup _{t\to T}\|u(t, \cdot)\| = \infty. \]
	The second part follows from the same argument as in \cite[Theorem A]{Tso}.
\end{proof}

\section{Associated Dirichlet problems}\label{section: dirichlet problems}
In this section, we use a variational method to study the solvability of the family of Dirichlet problems
 \begin{equation}
\begin{cases}\label{eq: degen-dirichlet-problem}
(u^{\star})^k\det D^2u = |u|^{p} &\text{ in }\Omega\\
u=0 &\text{ on }\partial\Omega.
\end{cases}
\end{equation}
 The case $p<n+k$, $p>n+k$, $p=n+k$ will be treated seperately. We will refer to the three different cases as the {\em subcritical}, {\em supercritical} and {\em critical} case respectively. 

More generally, let $F(x, u)\geq 0$ be a non-negative function such that $F(x, u)>0$ for $u<0$. Then we will consider Dirichlet problems of the form
\begin{equation}\label{eq: elliptic-equations-F}
    \begin{cases}
        (u^{\star})^k\det D^2u = F(x, u) &\text{ in }\Omega\\
        u=0 &\text{ on }\partial\Omega,
    \end{cases}
\end{equation} 
whose solutions are the critical points of $\mathcal J$, and are the stationary points of the flow \eqref{eq: parabolic-equation}.

We will need the following estimates for solutions of \eqref{eq: elliptic-equations-F}, which by now are standard. 

\begin{theorem}\label{thm: elliptic estimates}
	Let $n+k\geq 0$ and $u$ be a solution to \eqref{eq: elliptic-equations-F} with 
	\[\|u\|_{L^{\infty}}\leq K\]
	and 
	\[u(0)\leq -\eps<0. \]
	 Then we have the estimates
	\begin{enumerate}
		\item We have a gradient estimate \[\sup_\Omega|\nabla u|\leq C\]
		for some $C$ depending on $n, k, \eps, \Omega, K$, and,  $\sup_{\Omega\times [-K, 0]} F$. 
		\item For every $\Omega'\ssubset \Omega$, we have the estimates
		\[\|u\|_{C^{l, \alpha}(\Omega')}\leq C\]
		for $C$ depending on $n, k, \Omega, d(\Omega', \partial\Omega), l, \alpha, K, \eps$, a positive lower bound for $F$ on $\Omega'$, and bounds on $F$ and its derivatives on $\Omega'\times [-K, \sup_{\Omega'} u]$. 
		\item If in addition $F(x, u)\geq \eta >0$ is strictly positive. Then we have
		\[\|u\|_{C^{l, \alpha}(\overline\Omega)}\leq C\]
		for $C$ depending on $n, k, \Omega, l, \alpha, K, \eps, \eta$, and $\|F\|_{C^{l+5}(\overline\Omega\times[-K, 0])}$.
	\end{enumerate}
\end{theorem}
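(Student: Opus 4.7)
The plan is to reduce the equation $(u^{\star})^k\det D^2u = F(x,u)$ to a standard Monge-Amp\`ere equation with controlled right-hand side, and then invoke the now-classical suite of estimates (Pogorelov interior, Caffarelli-Nirenberg-Spruck boundary, Evans-Krylov, Schauder bootstrap). The very first step is the gradient estimate. Since $u$ is convex with $u|_{\partial\Omega}=0$, the supremum of $|\nabla u|$ is attained on $\partial\Omega$, so it suffices to control the inward normal derivative at each boundary point. For each $x_0\in\partial\Omega$ I would compare $u$ above and below with barriers of the form $\eps(|x-y_0|^2-R^2)$ (as in \cite[pg.\ 391]{CNS}); the lower barrier uses the hypothesis $u(0)\le -\eps<0$ and strict convexity of $\Omega$, while the upper barrier uses $\|u\|_{L^{\infty}}\le K$. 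This is the same barrier argument sketched in the proof of Theorem~\ref{thm: estimate-parabolic}.

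Once $|\nabla u|$ is bounded, $u^{\star}=\langle x,\nabla u\rangle - u$ is bounded above, and the lower bound $u^{\star}\ge |u(0)|\ge \eps$ gives a two-sided positive bound on $u^{\star}$. Hence $(u^{\star})^{-k}$ is bounded above and below by positive constants depending only on the allowed data, and the equation reads
\[\det D^2u = (u^{\star})^{-k}F(x,u),\]
a Monge-Amp\`ere equation whose right-hand side is bounded above (automatically) and bounded below when $F\ge\eta>0$. For part (2) I would apply Pogorelov's interior $C^2$ estimate using a cutoff supported in $\Omega'\Subset\Omega$; differentiating the equation produces extra gradient-dependent terms coming from $\partial_i u^{\star}=\langle x,\nabla u_i\rangle$, but these are handled exactly as in \cite[Appendix]{Tso} combined with the gradient-term treatment of \cite[\S7]{CNS}. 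With $D^2u$ bounded, the operator is uniformly elliptic and concave, so Evans-Krylov yields $C^{2,\alpha}_{\mathrm{loc}}$, and a standard Schauder bootstrap using the interior bound on $F$ and its derivatives gives the higher-order interior estimates.

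For part (3), with the additional lower bound $F\ge\eta$, the linearized operator is uniformly elliptic near the boundary as well. I would then establish boundary $C^2$ estimates following the framework of \cite[\S2--\S7]{CNS}: tangential-tangential derivatives are controlled directly by differentiating $u|_{\partial\Omega}=0$, the mixed tangential-normal second derivatives are bounded using carefully chosen barriers built from the defining function of $\Omega$, and the pure normal-normal derivative is then recovered from the equation $\det D^2u = (u^{\star})^{-k}F$ together with a strict positive lower bound on the minor determinants. Globally bounded $D^2u$ and concavity of $\log\det$ combined with Evans-Krylov give $C^{2,\alpha}(\overline\Omega)$, and Schauder theory upgrades this to the claimed $C^{l,\alpha}(\overline\Omega)$ bound. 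The main technical obstacle, as in the parabolic Theorem~\ref{thm: estimate-parabolic}, is the boundary $C^2$ estimate: the presence of the nonlocal-looking quantity $u^{\star}$ couples second derivatives with gradient terms, and the barrier construction has to be modified slightly from the pure Monge-Amp\`ere case of \cite{CNS} to absorb these extra gradient contributions without destroying the sign needed for the comparison.
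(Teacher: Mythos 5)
Your proposal is correct and takes essentially the same route as the paper's terse proof, which likewise invokes the CNS subsolution construction for the gradient bound, Pogorelov's interior estimate plus Evans--Krylov for interior $C^{2,\alpha}$, and the CNS/Krylov boundary machinery for the global estimates. Your explicit observation that a two-sided bound on $u^{\star}$ follows once $|\nabla u|$ is controlled (so the equation reduces to $\det D^2u=(u^{\star})^{-k}F(x,u)$ with right-hand side bounded above and, when $F\geq\eta$, also below) is precisely the reduction that makes those cited results applicable, and is a faithful unpacking of the paper's citations.
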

\begin{proof}
	The estimate for $|\nabla u|$ can be deduced from the subsolution construction following the argument preceeding \cite[Theorem 7]{CNS}. The second statement follows from Pogorelov's interior estimates \cite{Pogorelov} (see also \cite{Gilbarg-Tru}) and Evans-Krylov \cite{Evans, Krylov}. The third part follows from \cite[Theorem 7]{CNS} or \cite{Krylov}. 
\end{proof}

\subsection{Subcritical case}
Now we will turn our attention to the study the Dirichlet problem \eqref{eq: elliptic-equations-F}. The first result deals with the subcritical case. 

\begin{theorem}[Subcritical case]\label{thm: subcritical}
Suppose that $F(x, u)\geq \eta>0$ satisfies 
\begin{enumerate}
    \item There exist $\lambda< \underline\lambda_{n+k+1}$, and $M>1$ such that $\int_u^0F(x, s)ds\leq \lambda|u|^{n+k+1}$ for $|u|\geq M$. 
    \item $\inf_{u\in \mathcal C_0}\mathcal J(u)<\mathcal J(0) = 0$
\end{enumerate}
then \eqref{eq: elliptic-equations-F} admit a solution. 
\end{theorem}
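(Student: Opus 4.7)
The strategy is variational: I aim to produce a non-trivial critical point of the functional $\mathcal{J}$, since by the first variation formula \eqref{eq: var-of-J}, such critical points are precisely the solutions of \eqref{eq: elliptic-equations-F}. Rather than attempting direct minimization (which is awkward because $\mathcal{C}_0$ is not closed under the natural weak limits), I plan to use the parabolic gradient flow \eqref{eq: parabolic-equation} from Section~\ref{section: gradient flow}, whose long-time limits will give the desired critical points.

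The first task is coercivity. Applying Theorem~\ref{thm: sobolev-inequality} gives $(n+k+1) H_{n+k}(u)\geq \underline{\lambda}_{n+k+1} \int_\Omega |u|^{n+k+1}$. Hypothesis (1) together with the bound on the region $\{|u|< M\}$ (where $F$ is uniformly bounded) yields $\int_\Omega \int_u^0 F(x,s)\,ds \leq \lambda\int_\Omega |u|^{n+k+1} + C_0$ for a constant $C_0$ depending only on $F$, $M$, and $|\Omega|$. Since $\lambda<\underline{\lambda}_{n+k+1}$, combining the two gives a coercivity estimate of the form $\mathcal{J}(u)\geq c_1 \int_\Omega |u|^{n+k+1} - C_0$ with $c_1>0$. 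Because elements of $\mathcal{C}_0$ are convex and vanish on $\partial\Omega$, an $L^{n+k+1}$-bound on such a $u$ translates (by the argument in the proof of Theorem~\ref{thm: sobolev-inequality}) to a uniform $L^{\infty}$-bound.

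By hypothesis (2), choose $u_0\in\mathcal{C}_0$ with $\mathcal{J}(u_0)<0$, and let $u(t,\cdot)$ be the solution of \eqref{eq: parabolic-equation} starting from $u_0$ provided by Proposition~\ref{prop: short-time-existence}. Since $\mathcal{J}$ is non-increasing along the flow, $\mathcal{J}(u(t,\cdot))\leq \mathcal{J}(u_0)<0$ for all $t$ in the existence interval, and coercivity gives a uniform $L^{\infty}$-bound on $u(t,\cdot)$. To apply Theorem~\ref{thm: estimate-parabolic}, I also need a uniform lower barrier $u(t,0)\leq -\varepsilon<0$; this follows from the same supersolution $v=\varepsilon(|x|^2-r^2)$ used in Proposition~\ref{prop: short-time-existence}, since $F\geq \eta>0$ allows one to choose $\varepsilon$ independently of $t$ via the parabolic comparison principle. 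With the $L^\infty$-bound and the barrier in hand, Theorem~\ref{thm: estimate-parabolic} and the higher-order bootstrap corollary yield uniform $\tilde{C}^{l,\alpha}$ bounds on $u$ on $[1,\infty)\times\overline{\Omega}$, and in particular the flow exists for all $t>0$.

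Finally, monotonicity of $\mathcal{J}$ plus the bound $\mathcal{J}\geq -C$ give the energy identity
\[\int_0^{\infty}\int_\Omega u_t\,F(x,u)\,(e^{u_t}-1)\,dx\,dt = \mathcal{J}(u_0)-\lim_{t\to\infty}\mathcal{J}(u(t,\cdot))<\infty,\]
so one can select $t_j\to\infty$ along which $u_t(t_j,\cdot)\to 0$ in a suitable sense. The uniform $C^{l,\alpha}$ bounds let me extract a subsequential limit $u(t_j,\cdot)\to u_\infty$ in $C^{l,\alpha}(\overline{\Omega})$, and $u_\infty$ is a stationary point of the flow, hence a solution of \eqref{eq: elliptic-equations-F}. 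Since $\mathcal{J}(u_\infty)\leq \mathcal{J}(u_0)<0=\mathcal{J}(0)$, $u_\infty$ is non-trivial. The main obstacle I expect is propagating the center barrier $u(t,0)\leq -\varepsilon$ uniformly for all $t\geq 0$ (not merely on a short interval); without this, the a priori estimates in Theorem~\ref{thm: estimate-parabolic} degenerate. This is where the strict positivity $F\geq \eta>0$ plays an essential role, and is the one point where one must take care.
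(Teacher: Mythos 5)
Your proposal is correct and follows essentially the same route as the paper: establish coercivity of $\mathcal J$ from hypothesis (1) and the Sobolev inequality, run the gradient flow \eqref{eq: parabolic-equation} from a point $u_0$ with $\mathcal J(u_0)<0$, use the a priori bounds (upper $L^\infty$ bound from coercivity and monotonicity of $\mathcal J$, lower bound $u(t,0)\leq -\eps$ from the stationary supersolution of Proposition~\ref{prop: short-time-existence} together with $F\geq\eta>0$) to invoke Theorem~\ref{thm: estimate-parabolic}, and extract a subsequential limit along which $\frac{d}{dt}\mathcal J\to 0$, yielding a stationary point which is non-trivial since $\mathcal J(u_\infty)<0$. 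The paper's proof differs only cosmetically (it derives the $L^\infty$ lower bound from the strict negativity of $\mathcal J$ via a Chebyshev-type argument rather than re-invoking the supersolution barrier), so there is no substantive gap.
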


\begin{proof}
By assumption~1 and the definition of $\underline\lambda_{n+k+1}$ \eqref{eq: def-of-lambda-n+k+1}, we have
\begin{equation}\label{eq: subcri-lower-bound-J}
\mathcal J(u)\geq \underline\lambda_{n+k+1}\int_\Omega |u|^{n+k+1}-\int_\Omega\left|\int_u^0F(x, s)ds\right|\geq c\|u\|_{L^{\infty}}^{n+k+1}-CM,
\end{equation}
which implies $\mathcal J(u)$ is bounded from below. Since $\inf_{u\in \mathcal C_0}J(u)<\mathcal J(0)$, and $\mathcal J(u)$, we can pick $u_0\in \mathcal C_0$ so that $\mathcal J(u_0)<0$. Let $u(t, x)$ be the solution of the parabolic equation \eqref{eq: parabolic-equation}, then by Proposition~\ref{prop: short-time-existence}, we have $\mathcal J(u(t, \cdot))<\mathcal J(u_0)< 0$. We claim that there exist $K_0, K_1>0$ such that 
\[K_0^{-1}\leq \|u(t, \cdot)\|_{L^{\infty}}\leq K_1. \]
The upper bound is a consequence of \eqref{eq: subcri-lower-bound-J} and the fact that $\mathcal J(u(t, \cdot))$ is decreasing. The lower bound follows from the fact that $\mathcal J(u(t, \cdot))$ is negative, therefore there is some $\eps>0$ such that $\mathcal J(u(t, \cdot))<-\eps<0$, which means
\[\int_X \int_{u(t, x)}^0F(x, s)ds\geq \eps. \]
and it follows from Chebychev's that $\|u\|_{L^{\infty}}$ must be bounded from below. Once we have a uniform bound of $\|u(t, \cdot)\|_{L^{\infty}}$ from above and below, it follows by Proposition~\ref{prop: short-time-existence} that $u(t, x)$ exists for all time, and the estimates in Theorem~\ref{thm: estimate-parabolic} gives us bounds of all order on $u(t, \cdot)$ as $t\to\infty$. Since $\mathcal J(u(t, \cdot))$ is decreasing and bounded below, we can extract a subsequence of times $t_j\to \infty$ for which
\[\lim_{j\to \infty}\left.\frac{d}{dt}\right|_{t = t_j} \mathcal J(u(t, \cdot)) = 0.\]
If we denote $u_j(x) = u(t_j, x)$, then we can rewrite this as
\[\lim_{j\to \infty}\int_\Omega\left(\log ((u_j^{\star})^k\det D^2u_j)-\log F(x, u_j)\right)\left((u_j^{\star})^k\det D^2u_j-F(x, u_j)\right) = 0. \]
From this and the estimates on $\|u(t_j, \cdot)\|_{C^{k, \alpha}}$, we can extract a convergent subsequence $u(t_j, \cdot)\to u_{\infty}$ which must be a solution of the equation \eqref{eq: elliptic-equations-F}. 
\end{proof}

Now we prove a uniqueness results. For the uniquess theorem, we do not need to assume $F$ is strictly positive. 
\begin{theorem}[Uniqueness]\label{thm: uniqueness}
Suppose $n+k\geq 0$ and $F(x,u)\geq 0$ satisfies 
\begin{enumerate}
    \item $F(x, u)>0$ for $u<0$. 
    \item $F(x, tu)\geq t^{n+k}F(x, u)$ for all $0<t<1$. 
    \item For $0<|u|\ll 1$, the inequality above is strict. 
\end{enumerate}
Then there exist at most one non-zero solution to \eqref{eq: elliptic-equations-F} in $C^{2, \alpha}(\overline\Omega)$. 
\end{theorem}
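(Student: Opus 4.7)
The approach will be a classical touching-point argument, using the subhomogeneity of $F$ to constrain where two distinct solutions can coexist, together with the strong maximum principle / Hopf boundary point lemma applied to a carefully linearized equation for their difference.

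Suppose for contradiction that $u_1, u_2 \in C^{2,\alpha}(\overline\Omega)$ are two distinct nontrivial solutions of \eqref{eq: elliptic-equations-F}. Since each $u_i$ is strictly convex on $\overline\Omega$, vanishes on $\partial\Omega$, and has nonvanishing outward normal derivative there (by Hopf applied to the nontrivial solution), the ratio $\sigma(x):=u_2(x)/u_1(x)$ extends continuously and positively to $\overline\Omega$, the boundary value at $x_0 \in \partial\Omega$ being $\partial_\nu u_2(x_0)/\partial_\nu u_1(x_0)$. Let $t:=\inf_{\overline\Omega}\sigma$, attained at some $x_0 \in \overline\Omega$. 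Exchanging $u_1,u_2$ if necessary we may assume $t\leq 1$; since $t=\sup\sigma=1$ forces $u_1\equiv u_2$, we may in fact assume $t<1$. Setting $w:=tu_1-u_2$, the inequality $\sigma\geq t$ (with $u_i<0$) gives $w\geq 0$ in $\Omega$, $w|_{\partial\Omega}=0$, and $w(x_0)=0$.

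The key step is to show $w$ satisfies a linear elliptic differential inequality. For $s\in[0,1]$ the interpolation $A(s):=(1-s)tu_1+su_2=tu_1-sw$ is a smooth family of strictly convex functions with $A(s)^\star=(1-s)tu_1^\star+su_2^\star>0$ (using linearity of $u\mapsto\langle x,\nabla u\rangle-u$). Writing $\mu(s,x):=(A^\star)^k\det D^2A$ and differentiating $\log\mu$ in $s$, one computes
\[
\frac{d\mu}{ds}=-\mu\left[k\,\frac{w^\star}{A^\star}+(D^2A)^{-ij}w_{ij}\right],\qquad w^\star:=\langle x,\nabla w\rangle-w.
\]
Integrating from $0$ to $1$ and using the PDE for both $u_1$ and $u_2$ yields
\[
F(x,u_2)-t^{n+k}F(x,u_1)=-\tilde a^{ij}(x)w_{ij}-k\,\alpha(x)\bigl(\langle x,\nabla w\rangle-w\bigr),
\]
where $\tilde a^{ij}(x):=\int_0^1\mu(D^2A)^{-ij}\,ds$ is pointwise positive definite and $\alpha(x):=\int_0^1\mu/A^\star\,ds>0$. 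On the other hand, splitting the left side as $[F(x,u_2)-F(x,tu_1)]+[F(x,tu_1)-t^{n+k}F(x,u_1)]$, the mean value theorem gives $F(x,u_2)-F(x,tu_1)=-w\int_0^1 F_u(x,A(s))\,ds$, while $F(x,tu_1)-t^{n+k}F(x,u_1)\geq 0$ by condition (2) (since $t<1$). Combining these and dropping the nonnegative term produces a linear inequality $Lw\leq 0$ in $\Omega$ with uniformly elliptic principal part $\tilde a^{ij}$; absorbing the positive part of the zero-order coefficient (valid since $w\geq 0$), we may assume $L$ has nonpositive zero-order coefficient. The strong maximum principle then forces $w\equiv 0$ when $x_0\in\Omega$; when $x_0\in\partial\Omega$, the equality $\partial_\nu u_2(x_0)=t\,\partial_\nu u_1(x_0)$ (from the boundary value of $\sigma$) gives $\partial_\nu w(x_0)=0$, contradicting Hopf's lemma unless $w\equiv 0$. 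In either case $u_2\equiv tu_1$ in $\Omega$. Substituting into the PDE gives $F(x,tu_1(x))=t^{n+k}F(x,u_1(x))$ for every $x\in\Omega$. But $u_1=0$ on $\partial\Omega$, so points with $0<|u_1(x)|\ll 1$ exist arbitrarily close to $\partial\Omega$, and at such points condition (3) (combined with $t<1$) yields a \emph{strict} inequality, a contradiction. Hence $t=1$ and $u_1\equiv u_2$.

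The main obstacle is the derivation of the linear elliptic inequality for $w$ in a form compatible with the classical maximum principle. The zero-order coefficient of $L$ mixes a term proportional to $k$ (which may be negative, subject only to $n+k\geq 0$) with $\int_0^1 F_u(x,A(s))\,ds$ (of unprescribed sign), so identifying the right inequality and reducing it to a form with nonpositive zero-order coefficient requires the standard device of subtracting off $c^+w\geq 0$. Once this technical step is in hand, the strong maximum principle, Hopf's lemma, and the final appeal to condition (3) in a boundary neighborhood proceed by routine arguments.
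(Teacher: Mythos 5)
Your overall strategy is the same as the paper's: normalize by the infimum of the ratio $u_2/u_1$ to get a touching configuration, linearize the Monge--Amp\`ere operator along the segment between the two (normalized) solutions, and conclude via the strong maximum principle and Hopf's lemma, with condition (3) killing the remaining scaling freedom. The interior part of the argument is correct, and your derivation of the linear inequality for $w=tu_1-u_2$ is a clean integral-form alternative to the paper's one-sided concavity bound on $(\det)^{1/n}$.

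The gap is in the boundary case. You assert that $\tilde a^{ij}(x)=\int_0^1 (A^\star)^k\operatorname{adj}(D^2A)^{ij}\,ds$ is uniformly elliptic, but this is false exactly in the cases the theorem is meant to cover. When $F(x,0)=0$ (e.g.\ $F=|u|^p$, the main application), every $C^{2,\alpha}(\overline\Omega)$ solution has $\det D^2u=0$ on $\partial\Omega$, so $D^2A(s)$ is degenerate on $\partial\Omega$ and $\operatorname{adj}(D^2A)$ drops rank there; $\tilde a^{ij}$ is only positive \emph{semi}definite at boundary points. When the infimum of $\sigma=u_2/u_1$ is attained at $x_0\in\partial\Omega$, the standard Hopf lemma therefore does not apply to $L$ directly, and the barrier computation $Lw\geq c>0$ breaks because one cannot bound $\sum_i\tilde a^{ii}$ from below. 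This is precisely the difficulty the paper isolates: it treats the nondegenerate boundary case with the standard Hopf lemma and, for $\det D^2\tilde u_1(x_0)=0$, proves a structural lemma showing that $\tilde u_1^{ij}(x-y)_i(x-y)_j\geq c\,|(D^2\tilde u_1)^{-1}|$ in an annulus of the interior tangent ball (using that the tangential part of $D^2\tilde u_1$ is controlled by the boundary curvature, so the degeneracy is confined to the normal direction). That lower bound is exactly what makes the Hopf barrier $e^{-\alpha|x-y|^2}-e^{-\alpha r^2}$ work despite the degeneracy, after inserting an additional corrector $x_n^2-A|x'|^3$ to absorb the $o(1)$ error. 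Your proof needs an analogue of this lemma; without it, the step ``contradicting Hopf's lemma unless $w\equiv 0$'' is unjustified whenever $F(x,0)=0$.

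A small secondary point: the sentence ``since $t=\sup\sigma=1$ forces $u_1\equiv u_2$'' conflates $\inf$ and $\sup$; the correct reduction is that if $\inf\sigma=1$ but $u_1\not\equiv u_2$, then $\inf(u_1/u_2)<1$, so one may swap and restart with $t<1$. This is easily repaired.
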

\begin{proof}
Assume there are two distict non-zero solutions $u_1$ and $u_2$ to \eqref{eq: elliptic-equations-F}. Without loss of generality we may assume that $|u_1| \nleq |u_2|$. Let 
\[t_0 := \sup \{t>0: t|u_1|\leq |u_2|\}. \]
then $t_0<1$ and let us set $\tilde{u}_1 = t_0u_1$, then by assumption 2, we have
\[(\tilde u^{\star}_1)^k\det D^2\tilde u_1 = t_0^{n+k}F(x, u_1)\leq F(x, \tilde u_1),\]
which implies 
\begin{align}
0&\leq (\det D^2u_2)^{\frac 1 n}-(\det D^2\tilde u_1)^{\frac 1 n}
-\left(\frac{F(x, u_2)^{\frac{1}{n}}}{(u_2^{\star})^{\frac{k}{n}}}-\frac{F(x, \tilde u_1)^{\frac{1}{n}}}{(\tilde u_1^{\star})^{\frac{k}{n}}}\right) \\
&\label{eq: subsol-(u-v)} \leq \frac{1}{n}(\det D^2\tilde u_1)^{\frac{1}{n}}\tilde u_1^{ij}(u_2-\tilde u_1)_{ij}
-\left(\frac{F(x, u_2)^{\frac{1}{n}}}{(u_2^{\star})^{\frac{k}{n}}}-\frac{F(x, \tilde u_1)^{\frac{1}{n}}}{(\tilde u_1^{\star})^{\frac{k}{n}}}\right) 
\end{align}
where the second line follows from the concavity of $M \mapsto (\det M)^{\frac 1 n}$. 
By our choice of $t_0$, we have $|\tilde u_1|\leq |u_2|$ and one of the following must be true. \begin{enumerate}
    \item $\tilde u_1(x_0) = u_2(x_0)$ for some $x_0\in \Omega$
    \item $\langle x_0, D\tilde u_1(x_0)\rangle = \langle x_0, Du_2(x_0)  \rangle $ for some $x_0\in \partial\Omega$. 
\end{enumerate}
In the first case, we have $\tilde u_1\equiv u_2$ by the strong maximum principle, and in the second case we need to apply the Hopf lemma. If $\det D^2\tilde u_1(x_0)>0$, then we can apply the standard Hopf lemma \cite[Lemma 3.4]{Gilbarg-Tru}. In the case $\det D^2\tilde u_1(x_0)=0$, we will need the following elementary lemma in order to apply the Hopf lemma. 
\begin{lem}
	Let $x_0\in \partial \Omega$ be a boundary point, and $B_r(y)\subset \Omega$ be a interior ball that touches $\partial \Omega$ at $x_0$. Then on $B_r(y)\setminus B_{r/2}(y)$, there exist a constant $c>0$ such that
	\begin{equation}\label{eq: normal-control-inverse}
	\tilde u_1^{ij}(x-y)_i(x-y)_j\geq c|(D^2\tilde u_1)^{-1}|.
	\end{equation}
\end{lem}
\begin{proof}
	This follows from the fact that $\tilde u_1\in C^{2, \alpha}$ and $D^2\tilde u_1(x_0)$ is strictly positive in the tangential direction to $\partial\Omega$. Let us fix coordinate $(\tilde x_1, \ldots, \tilde x_n)$ centered at $x_0$ such that locally $\partial\Omega$ is the graph of a function $\rho(\tilde x_1, \ldots, \tilde x_{n-1})$ and $\rho(\tilde x') = \frac{1}{2}\sum_{i =1}^{n-1}\rho_i\tilde x_i^2 + O(|\tilde x'|^3)$ where $\rho_i>0$ are the principal curvatures of $\partial \Omega$ at $x_0$. By the boundary condition, we have for $i, j\in \{1, \ldots, n-1\}$
	\[(\tilde u_1)_{ij}(x_0) = -(\tilde u_1)_n(x_0)\delta_{ij}\rho_i, \]
	and since $\tilde u_1\in C^{2, \alpha}$, we have
	\[D^2\tilde u_1(x) = \begin{pmatrix}
	-(\tilde u_1)_n(x_0) \rho_1& 0 & 0& (\tilde u_1)_{n, 1}(x_0)\\ 
	0 & \ddots &0&\vdots  \\ 
	0&  0&-(\tilde u_1)_n(x_0)\rho_{n-1}& (\tilde u_1)_{n, n-1}(x_0) \\ 
	(\tilde u_1)_{1, n}(x_0)&\cdots &(\tilde u_1)_{n-1, n}(x_0)&(\tilde u_1)_{n, n}(x_0)
	\end{pmatrix}+O(|x-x_0|^{\alpha}).\]
	Therefore for $|x-x_0|$ sufficiently small, we get the inequality \eqref{eq: normal-control-inverse}. When $|x-x_0|$ is large, the inequality \eqref{eq: normal-control-inverse} follow trivially and the lemma is proved.  
\end{proof}
This lemma will allow us to apply the Hopf lemma to the operator $L = \frac{1}{n}\tilde u_1^{ij}\partial_i\partial_j$. First we note that by \eqref{eq: subsol-(u-v)}, we have
\[(\det D^2\tilde u_1)^{1/n}L(u_2-\tilde u_1)\geq o_{|x-x_0|}(1)\]
where $o_{|x-x_0|}(1)$ goes to $0$ as $|x-x_0|$ goes to $0$.  If we fix coordinate system $(x_1, \ldots, x_n)$ such that $x_0 = (0, \ldots, 0)$ and $\Omega\subset \mathbb R^{n-1}\times \mathbb R_{\geq 0}$, then locally we can write $\partial\Omega$ as the graph of a function $x_n = \rho(x_1, \ldots, x_{n-1})$ and moreover 
\[\rho(x_1, \ldots, x_{n-1}) = \frac{1}{2}\sum_{i = 1}^{n-1}\rho_i|x_i|^2+O(|x'|^3),\]
where $\rho_i>0$ are the principal curvature of $\partial\Omega$ at the origin. Then we can pick $A$ sufficiently large such that in a small neighborhood $U = B_\delta(0)\cap\Omega$ of the origin, we have
 \[u_2-\tilde u_1+ (x_n^2-A\sum_{i=1}^{n-1}|x_i|^3)< 0\]  and using the lemma above, and by shrinking $U$ if necessary, we have that in $U$, 
\begin{align}
(\det D^2\tilde u_1)^{1/n}L(u_2-\tilde u_1+ (x_n^2-A|x'|^3))&\geq o_{|x|}(1)+(c+O(|x'|))(\det D^1\tilde u_1)^{\frac{1}{n}}\sum_{i = 1}^nu^{ii}\\
&\geq c+o_{|x|}(1)+O(|x'|)\\
&\geq 0
\end{align}

Now we fix a small interior ball $B_r(y)\subset U$ that touches $\partial\Omega$ at $x_0$ and consider the standard Hopf barrier $w = e^{-\alpha|x-y|^2}-e^{-\alpha r^2}$. By the standard calculation \cite[Lemma 3.4]{Gilbarg-Tru}, we have 
\[Lw\geq \alpha e^{-\alpha \frac{r}{4}^2}(4\alpha u_1^{ij}(x-y)_i(x-y)_j-2 \sum_i u_1^{ii}). \]
For $\alpha$ sufficiently large, the lemma above gives us $Lw\geq c|(D^2 \tilde u_1)^{-1}|\geq c(\det D^2\tilde u_1)^{\frac{-1}{n}}$, hence we have in $B_r(y)$
\[L(u_2-\tilde u_1+ (x_n^2-A|x'|^3)+\eps w)\geq 0  \]
and if we choose $\eps$ sufficiently small, then $u_2-\tilde u_1+ (x_n^2-A|x'|^3)+\eps w \leq 0$ on $B_r(y)\setminus B_{r/2}(y)$, and applying the maximum principle gives $u_2-\tilde u_1+ (x_n^2-A|x'|^3)\leq -\eps w$. In particular
\[-\frac{\partial (u_2-\tilde u_1)}{\partial x_n}(0)>\eps \frac{\partial w}{\partial x_n}(0)>0 \]
which is a contradiction. Hence we conclude $u_2 = t_0u_1$, which implies $t_0^{n+k}F(x, u_1) \equiv F(x, t_0u_1)$, which contradicts our third assumption on $F$. 
\end{proof}

Notice the assumptions in Theorem~\ref{thm: uniqueness} are satisfied if $F(x, u)^{\frac{1}{n+k}}$ is concave. In particular Theorem~\ref{thm: uniqueness} applies for $F(x, u) = |u|^p$ for $p<n+k$. As an corollary, we obtain the existence and uniqueness of solutions to the Dirichlet problem \eqref{eq: degen-dirichlet-problem} in the subcritical case.

\begin{corr}\label{cor: subcritical}
Let $0<p< n+k$, then the Dirichlet problems \eqref{eq: degen-dirichlet-problem} admits a unique non-zero convex solution $u\in C^{2, \alpha}(\overline\Omega)$, which minimizes the functional 
\[\mathcal J_p(u):= H_{n+k}(u)-\frac{1}{p+1}\int_{\Omega}(-u)^{p+1}. \]
\end{corr}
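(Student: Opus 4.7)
The plan is to obtain existence by applying Theorem~\ref{thm: subcritical} (subcritical case) and uniqueness by applying Theorem~\ref{thm: uniqueness}, both with $F(x,u) = |u|^p$. The subtlety is that Theorem~\ref{thm: subcritical} requires $F \geq \eta > 0$, while $|u|^p$ vanishes at $u = 0$, so existence has to be obtained by a regularization followed by a limiting argument.

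First I would verify the two hypotheses of Theorem~\ref{thm: subcritical} for the regularized functional $\mathcal{J}_{p,\varepsilon}$ built from $F_\varepsilon(x,u) = |u|^p + \varepsilon$. Hypothesis~(1) follows immediately from $p+1 < n+k+1$: the primitive $\int_u^0 F_\varepsilon\,ds = \frac{(-u)^{p+1}}{p+1} + \varepsilon(-u)$ is dominated by $\lambda|u|^{n+k+1}$ for any fixed $\lambda < \underline\lambda_{n+k+1}$ once $|u|$ is large, uniformly in $\varepsilon \in (0,1]$. For Hypothesis~(2), pick any $u_0 \in \mathcal{C}_0$ and use the homogeneities $(tu_0)^\star = tu_0^\star$ and $\det D^2(tu_0) = t^n \det D^2 u_0$, which give $H_{n+k}(tu_0) = t^{n+k+1}H_{n+k}(u_0)$, so that
\[\mathcal{J}_{p,\varepsilon}(tu_0) = t^{n+k+1}H_{n+k}(u_0) - \frac{t^{p+1}}{p+1}\int_\Omega(-u_0)^{p+1} - t\varepsilon\int_\Omega(-u_0).\]
Because $p+1 < n+k+1$, this is strictly negative for $t$ sufficiently small, uniformly in $\varepsilon \in (0,1]$.

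Next I would extract uniform estimates for the solutions $u_\varepsilon$ produced by Theorem~\ref{thm: subcritical}. Combining $\mathcal{J}_{p,\varepsilon}(u_\varepsilon) \leq -c < 0$ with the Sobolev inequality (Theorem~\ref{thm: sobolev-inequality}) gives a uniform upper bound on $\|u_\varepsilon\|_{L^\infty}$, and the negativity of $\mathcal{J}_{p,\varepsilon}(u_\varepsilon)$, combined with the trivial bound $\int_\Omega(-u_\varepsilon)^{p+1} \leq \|u_\varepsilon\|_{L^\infty}^{p+1}|\Omega|$, delivers a uniform lower bound. Convexity plus $0 \in \Omega$ then yields $|u_\varepsilon(0)| \geq c' > 0$ as in the proof of Theorem~\ref{thm: sobolev-inequality}. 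Theorem~\ref{thm: elliptic estimates}(2) then provides uniform $C^{l,\alpha}_{\mathrm{loc}}(\Omega)$ bounds, and passage to a subsequential limit yields $u \in C^\infty(\Omega) \cap C(\overline{\Omega})$ solving \eqref{eq: degen-dirichlet-problem}. The main obstacle is upgrading this limit to $C^{2,\alpha}(\overline{\Omega})$ despite the degeneracy of $F$ at the boundary: I expect to handle it by constructing explicit barriers (e.g.\ of the form $-c\,d(x,\partial\Omega)^\beta$) that pin down the vanishing rate of $u$ at $\partial\Omega$ so that the right-hand side is nondegenerate in a controlled way, and then applying the boundary estimates of \cite{CNS, Krylov} invoked in Theorem~\ref{thm: elliptic estimates}(3), following the strategy in \cite{Tso}.

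Finally, uniqueness is immediate from Theorem~\ref{thm: uniqueness}, since $F(x,u) = |u|^p$ is strictly positive for $u < 0$ and satisfies $F(x,tu) = t^p F(x,u) > t^{n+k} F(x,u)$ for all $0 < t < 1$, with strict inequality because $p < n+k$. Once uniqueness of nonzero critical points is established, and since $\mathcal{J}_p$ is bounded below on $\mathcal{C}_0$ with $\inf \mathcal{J}_p < 0 = \mathcal{J}_p(0)$, the unique nonzero critical point produced above must be the minimizer.
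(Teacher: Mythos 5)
Your overall strategy matches the paper's: regularize, apply Theorem~\ref{thm: subcritical}, extract uniform bounds, pass to a limit, and invoke Theorem~\ref{thm: uniqueness} for uniqueness and the minimizing property. The specific regularization differs slightly (the paper uses $F_\varepsilon(x,u) = (\varepsilon - u)^p$ rather than $|u|^p + \varepsilon$), but both serve the same purpose and your verification of the hypotheses of Theorem~\ref{thm: subcritical} is correct.

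However, your proposed route to $C^{2,\alpha}(\overline\Omega)$ regularity has a real gap. You suggest constructing barriers to pin down the vanishing rate of $u$ at $\partial\Omega$ ``so that the right-hand side is nondegenerate in a controlled way, and then applying the boundary estimates of \cite{CNS, Krylov}.'' But the right-hand side $|u|^p$ necessarily vanishes on $\partial\Omega$ --- since $u$ vanishes there, $|u|^p \sim d(x,\partial\Omega)^p$ near the boundary no matter how precisely you control the vanishing rate --- so the equation $\det D^2 u = (u^\star)^{-k}|u|^p$ is genuinely degenerate at $\partial\Omega$. The boundary estimates of \cite{CNS, Krylov}, and indeed Theorem~\ref{thm: elliptic estimates}(3), all require the right-hand side to be bounded \emph{below} by a positive constant up to the boundary, which fails here. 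Barriers can give you $C^{0,1}(\overline\Omega)$ (or $C^{1,1}$, as Tso obtains in the analogous subcritical case), but not $C^{2,\alpha}$ up to the boundary. The paper instead appeals to the boundary localization theorem of Savin \cite{Savin} and the degenerate Schauder estimates of Le--Savin \cite{Le-Savin}, which were developed precisely to handle Monge--Amp\`ere equations whose right-hand side degenerates like a power of the distance to the boundary. These results are the essential ingredient you are missing; without them, your argument stops at interior smoothness plus global Lipschitz (or at best $C^{1,1}$) regularity.
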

\begin{proof}
    By Theorem~\ref{thm: subcritical}, there exist a unique $u_{\eps}\in C^{\infty}(\overline\Omega)$ that solves the Dirichlet problem
    \[ \begin{cases}
    (u_{\eps}^{\star})^k\det D^2u_{\eps} = (\eps-u_{\eps})^p &\text{ in }\Omega\\
    u_{\eps} = 0 &\text{ on }\partial\Omega. 
    \end{cases} \]
    Moreover, $u_{\eps}$ is a global minimizer of 
    \[\tilde{\mathcal J}_{p, \eps}(w) = H_{n+k}(w)-\frac{1}{p+1}\int_{\Omega}(\eps-w)^{p+1}. \]
    It's clear that $u_{\eps}$ is uniformly bounded above and below, therefore by Theorem~\ref{thm: elliptic estimates}, we get interior estimates of all order and $u_{\eps}$ must converge along some subsequence to some non-zero $u\in C^{\infty}(\Omega)\cap C^{0, 1}(\overline\Omega)$ which is a solution of \eqref{eq: degen-dirichlet-problem}. By the arguments of \cite[Theorem 1.3]{Savin} and \cite[Theorem 1.2]{Le-Savin} we get that $u\in C^{2, \alpha}(\overline\Omega)$. 
    Moreover $u$ minimzes $\mathcal J_p$ because $u_{\eps}$ are minimzers of $\tilde{\mathcal J}_{\eps, p}$, and the uniqueness follows from Theorem~\ref{thm: uniqueness}. 
\end{proof}

\subsection{Supercritical case}
Now we treat the supercritical case. In this case, we will obtain non-trivial solutions to \eqref{eq: degen-dirichlet-problem} by a min-max method.

\begin{theorem}[Supercritical case]\label{thm: supercritical}
Assume that $n+k\geq 0$. Let $F(x, u)\geq \eta>0$ be strictly positive and $\mathcal J$ be the functional
\[\mathcal J(u) = H_{n+k}(u)-\int_\Omega\int_u^0F(x, s)\,ds. \]
Assume $F$ satisfy the following,
\begin{enumerate}
    \item There exist $c, \sigma>0$ such that 
    \[\mathcal J(u)\geq c\]
    for $\|u\|_{L^{\infty}} = \sigma$. 
    \item There exist $u_0$, $u_1\in \mathcal C_0$ such that $\|u_0\|_{L^{\infty}}<\sigma<\|u_1\|_{L^{\infty}}$, and $\mathcal J(u_0)<c$, and $\mathcal J(u_1) < c$. 
    \item There exist $\theta\in (0, 1)$, $M>0$, such that 
    \[\int_u^0F(x, s)ds\leq \frac{1-\theta}{n+k+1}|u|F(x, u)\]
    for all $|u|>M$. 
    \item $\left|\frac{F_u(x, u)}{F(x, u)}\right|\leq C$. 
\end{enumerate}
then \eqref{eq: elliptic-equations-F} admit a solution $u\in C^{\infty}(\overline\Omega)$ and $\mathcal J(u) \geq c$
\end{theorem}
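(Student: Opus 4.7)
The plan is to produce a nontrivial critical point of $\mathcal J$ at the mountain-pass level, using the parabolic flow \eqref{eq: parabolic-equation} as the deformation and the Ambrosetti--Rabinowitz-type assumption (3) to supply compactness. Conditions (1) and (2) are arranged precisely to give the mountain-pass geometry. Let $\Gamma$ denote the continuous paths $\gamma:[0,1]\to\mathcal C_0$ (in, say, the $C^{2,\alpha}(\overline\Omega)$ topology) joining $u_0$ to $u_1$, and set
\[ c_* := \inf_{\gamma\in\Gamma}\ \max_{t\in[0,1]}\mathcal J(\gamma(t)). \]
Since $\|u_0\|_{L^\infty}<\sigma<\|u_1\|_{L^\infty}$, the intermediate value theorem forces every $\gamma\in\Gamma$ to cross $\{\|u\|_{L^\infty}=\sigma\}$, so by (1) one has $c_*\ge c>\max\{\mathcal J(u_0),\mathcal J(u_1)\}$. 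The goal is to show $c_*$ is attained at a solution of \eqref{eq: elliptic-equations-F}, which is nontrivial because $\mathcal J(0)=0<c_*$.

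Next I would establish Palais--Smale-type compactness at level $c_*$. For any sequence $\{u_j\}\subset\mathcal C_0$ with $\mathcal J(u_j)\to c_*$ and $\delta\mathcal J(u_j)[u_j]\to 0$, the Ambrosetti--Rabinowitz identity
\begin{align*}
\mathcal J(u_j)-\tfrac{1-\theta}{n+k+1}\delta\mathcal J(u_j)[u_j]
= \theta H_{n+k}(u_j) -\! \int_\Omega\!\!\int_{u_j}^0 F(x,s)\,ds\,dx+\tfrac{1-\theta}{n+k+1}\!\int_\Omega (-u_j)F(x,u_j)\,dx
\end{align*}
together with (3) (which balances the last two terms modulo a constant $C_M$ depending only on $M$ and $\sup_{|u|\le M}F$) yields $\theta H_{n+k}(u_j)\le C$. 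Theorem~\ref{thm: sobolev-inequality} then gives $\|u_j\|_{L^{n+k+1}}\le C$, and a cone comparison using convexity of $u_j$ with $u_j|_{\partial\Omega}=0$ upgrades this to $\|u_j\|_{L^\infty}\le K$. A second cone comparison from the origin — combined with the observation that $\mathcal J(u_j)\to c_*>0$ forces $\|u_j\|_{L^\infty}\ge c'>0$ — gives $u_j(0)\le -\eps_0<0$. These are exactly the hypotheses of Theorem~\ref{thm: elliptic estimates}, so a subsequence converges in $C^{l,\alpha}(\overline\Omega)$ to a solution $u_\infty$ of \eqref{eq: elliptic-equations-F} with $\mathcal J(u_\infty)=c_*>0$.

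To produce such a Palais--Smale sequence I would use the parabolic flow as the deformation. Given a near-optimal path $\gamma_j\in\Gamma$ with $\max_t\mathcal J(\gamma_j(t))\le c_*+1/j$, flow each $\gamma_j(t)$ by a continuously chosen time $\tau_j(t)\ge 0$ vanishing near $t=0,1$, producing a new path $\widetilde\gamma_j\in\Gamma$; by Proposition~\ref{prop: short-time-existence}(2) and the identity
\[ \tfrac{d}{ds}\mathcal J(u(s)) = -\!\int_\Omega \log\!\tfrac{(u^\star)^k\det D^2u}{F(x,u)}\bigl((u^\star)^k\det D^2u - F(x,u)\bigr)dx\le 0,\]
$\mathcal J$ strictly decreases along the flow except at stationary points. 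The standard deformation-lemma argument then asserts that if $c_*$ were not a critical value, the compactness of the previous step would make this decrease uniform on $\{|\mathcal J-c_*|\le\delta\}$ away from critical points, so one could choose $\tau_j(t)$ to push $\max_t\mathcal J(\widetilde\gamma_j(t))$ strictly below $c_*$, contradicting the definition of $c_*$. Hence there is a Palais--Smale sequence at level $c_*$, and the compactness step produces the desired $u_\infty$.

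The main obstacle is this deformation step: $\mathcal C_0$ is not a Banach space and no global pseudo-gradient field is immediately available, so one must directly verify continuous dependence of \eqref{eq: parabolic-equation} on its initial data (this is where hypothesis (4) enters, via Lipschitz control of $g=\log F$ in Gronwall-type estimates) and quantify the drop of $\mathcal J$ in terms of the distance to the critical set at level $c_*$. The use of the parabolic semigroup in place of an abstract pseudo-gradient is what makes running the deformation lemma feasible in this nonlinear, non-Banach setting.
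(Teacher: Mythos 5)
Your overall plan matches the paper's: a mountain-pass argument with the parabolic flow \eqref{eq: parabolic-equation} playing the role of the deformation, with assumption (3) supplying the Ambrosetti--Rabinowitz-type control and assumptions (1)--(2) giving the mountain-pass geometry. However, there are two places where your argument has a real gap and where the paper does something essentially different that you have not reproduced.

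First, the compactness step. You bound $H_{n+k}(u_j)$ by the identity $\mathcal J(u_j)-\tfrac{1-\theta}{n+k+1}\delta\mathcal J(u_j)[u_j]\ge\theta H_{n+k}(u_j)-C_M$ under the hypothesis $\delta\mathcal J(u_j)[u_j]\to 0$. But the parabolic flow does not hand you that quantity. Writing $G=(u^\star)^k\det D^2u$, the dissipation along the flow is
\[
-\frac{d}{dt}\mathcal J(u(t)) = \int_\Omega (G-F)\bigl(\log G-\log F\bigr),
\]
whereas $\delta\mathcal J(u)[u]=-\int_\Omega u\,(G-F)$, and there is no inequality between them without an a priori $L^\infty$ bound on $u$ and on $G$ --- precisely what you are trying to prove. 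Attempting to pass from small dissipation to small $\int u(G-F)$ is circular. The paper's Claim~\ref{claim: boundedness} avoids this entirely: fixing $\alpha$ with $e^{-\alpha}=1-\theta/2$ and letting $S=\{|\log G-\log F|\le\alpha\}$, the small dissipation forces $|\Omega\setminus S|$ to be small via $F\ge\eta$, and then one estimates the two pieces of $\mathcal J(u)$ separately: on $S$ the multiplicative closeness $G\ge(1-\theta/2)F$ combines with assumption (3) to give a term $\ge c\theta\|u\|_{L^\infty}^{n+k+1}-C$, while on $\Omega\setminus S$ one only needs the measure bound and a crude sign estimate. This splitting is the key step you would need to supply in place of the AR identity, because the flow's dissipation and the classical PS pairing $\delta\mathcal J[u]$ are not comparable in this setting.

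Second, the deformation step. You candidly flag the difficulty of running an abstract deformation lemma on $\mathcal C_0$ with the parabolic semigroup in place of a pseudo-gradient, but you leave it unresolved. The paper's resolution is concrete and avoids the abstract machinery: flow the whole path $\gamma$ to get $u(s,t,\cdot)$, show it stays in $\mathcal P$ for all time (this is Claim~\ref{claim: smoothness} and is where assumption (4) enters --- the evolution equation for $u_t$ has coefficients $k\,u^\star{}^{-1}$ and $F_u/F$, both bounded, giving $|u_t|\le Ce^{At}$ and hence no finite-time blow-up), and then exploit monotonicity of $\mathcal J$ in $t$: the sets $I_t=\{s:\mathcal J(u(s,t))\ge d\}$ are nonempty (else the deformed path would contradict the definition of $d$), closed, and nested decreasing in $t$, so their intersection contains some $s^*$. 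The flow line starting from $\gamma(s^*)$ then has $\mathcal J$ decreasing and bounded below by $d$, which directly produces the sequence of times with small dissipation needed to start Claim~\ref{claim: boundedness}. You do not need to prove continuous dependence of the flow on its data nor quantify a uniform drop; the nested-intersection trick replaces that.

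Finally, after the $L^\infty$ bound is obtained, the paper uses the parabolic estimates of Theorem~\ref{thm: estimate-parabolic} (which require only $\|u\|_{L^\infty}\le K$ and $u(0)\le-\eps$) to get uniform $\tilde C^{l,\alpha}$ control on the flow and extract a limit; you invoke the elliptic estimates of Theorem~\ref{thm: elliptic estimates}, which require $u_j$ to already be solutions of the elliptic equation, which a PS-type sequence need not be. So the regularity bootstrap has to go through the parabolic side as well.
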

\begin{proof}
The proof follows from a mountain-pass lemma, and is proved in the same way as \cite[Theorem C]{Tso} with some small modifications.  
Let
\[\mathcal P = \{\gamma:[0, 1]\to \mathcal C_0: \|\gamma(0)\|_{L^{\infty}}<\sigma<\|\gamma(1)\|_{L^{\infty}}, \mathcal J(\gamma(0))<c, \mathcal J(\gamma(1))<c\}\] 
and
\[d = \inf_{\gamma\in \mathcal P}\sup_{s\in [0, 1]}\mathcal J(\gamma(s))\geq c>0.\]
We will show that $d$ is a critical value of $\mathcal J$ which is attained by some critical point $u\in \mathcal C_0$. Let us pick a path $\gamma\in \mathcal P$ such that 
\[\bar{\mathcal J}(\gamma):=\sup_{s\in [0, 1]}\mathcal J(\gamma(s))<d+\eps.\]
Then $u(s, t, x)$ be the solution to the parabolic equation \eqref{eq: parabolic-equation} with initial condition $u(s, 0, \cdot) = \gamma(s)$. 
\begin{claim}\label{claim: smoothness}
    $u(s, t, x)\in \mathcal P$ exists for all time $t$, and $\bar{\mathcal J}(u(t))$ is decreasing in $t$. 
\end{claim}
\begin{proof}[Proof of Claim~\ref{claim: smoothness}]
	Short-time existence and the fact that $u(s, t, x)\in \mathcal P$ follows from Proposition~\ref{prop: short-time-existence} and the same arguments as in \cite[Theorem C]{Tso}. We will show that the solutions exist for all time. By Theorem~\ref{thm: estimate-parabolic}, it suffices to bound $|u(0, t)|$ from above and below uniformly in $t$. Let $v:B_r(0)\to \mathbb R$ solve $(v^{\star})^k\det D^2v = \delta$ for some $\delta<\eta$, and $v|_{\partial B_r(0)} = 0$, then $v$ is a supersolution of \eqref{eq: parabolic-equation}, and by the comparison principle, we have $|u(0, t)|>|v(0)|$. 
	Now we bound $u$ from below. The evolution equation for $u_t$ is
	\[u_{tt} = u^{ij}(u_t)_{ij}+k\frac{\langle x, \nabla u_t\rangle-u_t}{u^{\star}}-\frac{F_u(x, u)}{F(x, u)}u_t\]
	and since $\frac{F_u}{F}$ and $\frac{1}{u^{\star}}$ are both bounded, it follows that $|u_t|\leq Ce^{At}$ by the maximum principle. Hence $u$ is bounded from below on all finite time interval. The fact that $\overline{\mathcal J}(u(t))$ is decreasing in $t$ is clear. 
\end{proof}
Now let us define the closed sets $I_t\subset [0, 1]$ by 
\[I_t:=\{s\in [0, 1]: \mathcal J(u(s, t))\geq c\}\subset [0, 1],\]
then by the monotonicity of $\mathcal J(u(t, \cdot))$ along the parabolic equation~\eqref{eq: parabolic-equation}, the sets $I_t$ are a nested family of closed sets in $[0, 1]$, which means the intersection
\[I = \cap_{t>0}I_t\]
is non-empty. It follows that there exist an long-time solution $u(t, x)$ to the equation~\eqref{eq: parabolic-equation} such that 
\[\mathcal J(u(t, \cdot))\geq d \text{ for all }t>0. \]
By the monotonicity of $\mathcal J$, we have that for any $\eps>0$, we can pick $T$ sufficiently large, such that
\begin{equation}\label{eq: change-of-energy-formula}
\int_{T}^{\infty}\left(-\frac{d}{d t}\mathcal J(u(t, \cdot))\right)\,dt = \int_T^{\infty}\int_{\Omega}\left((u^{\star})^k\det D^2u-F(x, u)\right)\left(\log ((u^{\star})^k\det D^2u)-\log F(x, u)\right)\, dt\leq \eps. 
\end{equation}

\begin{claim}\label{claim: boundedness}
    For sufficiently large $t$, we have an estimate 
    \[\|u(t)\|_{L^{\infty}}\leq C\] uniformly in $t$. 
\end{claim}
\begin{proof}[Proof of Claim~\ref{claim: boundedness}]
We will follow the same line of argument as in \cite[Theorem C]{Tso}. Let us denote \[G:= (u^{\star})^k\det D^2u, \]
then by \eqref{eq: change-of-energy-formula} and the mean value theorem, we know that for every interval $[m, m+1]$ sufficiently large, we can find $t_m\in [m, m+1]$ such that
\[\int_\Omega (G(t_m)-F(t_m))(\log G(t_m)-\log F(t_m))\leq \eps. \] 
From now on we will fix such a $t_m$, and to simplify notation, we will assume all quantities are evaluated at chosen time $t_m$ and supress the dependence on $t_m$.
 
Let $\alpha>0$ be given by $e^{-\alpha} = 1-\frac{\theta}{2}$, and define $S\subset \Omega$ to be the set
\[S = \{|\log G-\log F|\leq \alpha\}\]
then 
\[\eps \geq \int_{\Omega\setminus S}(G-F)(\log G-\log F)\geq \alpha\int_{\Omega\setminus S}|G-F| = \alpha\int_{\Omega\setminus S}(e^{\log G-\log F}-1)F\geq \alpha(e^\alpha-1)\int_{\Omega\setminus S}F.\]
and since $F\geq \eta>0$, we have \[|\Omega\setminus S|\leq \frac{\eps}{\alpha(e^{\alpha}-1)\eta}.\] 

\[J(u) = \int_S\left(\frac{(-u)G}{n+k+1}-\int_u^0F(x, s)\,ds\right)+\int_{\Omega\setminus S}\left(\frac{(-u)G}{n+k+1}-\int_u^0F(x, s)\,ds\right)\]
For the first term, one has 
\begin{align}
\int_{S}\left(\frac{(-u)G}{n+k+1}-\int_u^0F(x, s)\,ds\right)&\geq \frac{1}{n+k+1}\int_S(-u)(G-F)+\frac{\theta}{n+k+1}\int_{S}(-u)F-C\\
&\geq \frac{\theta}{2(n+k+1)}\int_S(-u)F-C \\
&\geq c\theta\|u\|_{L^{\infty}}^{n+k+1}-C
\end{align}
For the second term, one has 
\begin{align}
\int_{\Omega\setminus S}\left(\frac{(-u)G}{n+k+1}-\int_u^0F(x, s)\,ds\right)&\geq \frac{1}{n+k+1}\int_{\Omega\setminus S}(-u)G-\frac{1-\theta}{n+k+1}\int_{\Omega\setminus S}(-u)F-C\\
&\geq -\frac{\eps}{\alpha(e^{\alpha}-1)}\|u\|_{L^{\infty}}-C
\end{align}
It follows that 
\[C\geq \mathcal J(u)\geq c\theta\|u\|_{L^{\infty}}^{n+k+1} -\frac{\eps}{\alpha(e^{\alpha}-1)}\|u\|_{L^{\infty}}-C.\]
This shows that $\|u(t_m)\|_{L^{\infty}}\leq C$ independent of $m$. To upgrade this bound to holds for all $t$ sufficiently large, we can follow the same argument as in \cite[Theorem C]{Tso}. This proves the claim. 
\end{proof}
With this claim, we know that $|u(t)|$ is bounded for all time, therefore by the estimates of Theorem~\ref{thm: estimate-parabolic}, it follows that $u(t, \cdot)$ is bounded uniformly in $C^{k, \alpha}$ independent of $t$. Moreover, there exist $t_i \to \infty$ and $\eps_i\to 0$, such that 
\[\left|\left.\frac{d}{d t}\right|_{t = t_i}\mathcal J(u(t))\right|\leq \eps_i.\]
 Therefore taking a subsequence as $i\to \infty$, we obtain a limit $u_{\infty}\in C^{\infty}(\overline\Omega)$ which solves \eqref{eq: elliptic-equations-F} and $d\leq\mathcal J(u_{\infty})(u)\leq d+\eps$. 
\end{proof}

\begin{corr}\label{cor: supercritical}
Let $p> n+k>0$, then the equations \eqref{eq: degen-dirichlet-problem} admit a non-zero convex solution $u\in C^{2, \alpha}(\overline\Omega)$. 
\end{corr}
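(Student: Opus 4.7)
The plan is to mirror the subcritical case (Corollary~\ref{cor: subcritical}): approximate $F(x,u) = |u|^p$ by a strictly positive family and apply Theorem~\ref{thm: supercritical}, then pass to the limit $\epsilon \to 0$. A convenient choice is
\[F_\epsilon(x,u) = (\epsilon - u)^p,\]
which is $\geq \epsilon^p > 0$ on $\mathbb{R}_{\leq 0}$, satisfies $|\partial_u F_\epsilon / F_\epsilon| \leq p/\epsilon$ (so hypothesis~(4) of Theorem~\ref{thm: supercritical} holds for each fixed $\epsilon$), and decreases monotonically to $|u|^p$.

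The first task is to verify hypotheses~(1)--(3) of Theorem~\ref{thm: supercritical} with constants uniform in $\epsilon \in (0,1]$. For~(3), a direct integration gives
\[\frac{\int_u^0 F_\epsilon(x,s)\,ds}{|u|F_\epsilon(x,u)} \leq \frac{1}{p+1}\Bigl(1 + \frac{\epsilon}{|u|}\Bigr) \leq \frac{1}{p+1}\Bigl(1+\tfrac{1}{M}\Bigr)\]
for $|u| \geq M$ and $\epsilon \leq 1$; since $p > n+k$, one can pick $M$ large and $\theta > 0$ small so that the right-hand side is $\leq (1-\theta)/(n+k+1)$. For the mountain-pass barrier~(1), I would combine Theorem~\ref{thm: sobolev-inequality} with the convex-geometric bound $\int_\Omega |u|^{n+k+1} \geq c\|u\|_{L^\infty}^{n+k+1}$ (obtained by comparing $u \in \mathcal C_0$ with cones through the point realizing $\|u\|_{L^\infty}$) and the elementary estimate $\int_u^0 F_\epsilon\,ds \leq C\epsilon^p|u| + C|u|^{p+1}$, to get for $\|u\|_{L^\infty} = \sigma$
\[\mathcal J_\epsilon(u) \geq c\sigma^{n+k+1} - C\epsilon^p\sigma - C\sigma^{p+1};\]
first choosing $\sigma$ small (using $p+1 > n+k+1$) and then $\epsilon$ small compared to $\sigma$ yields a uniform positive barrier $\mathcal J_\epsilon(u) \geq c_1 > 0$. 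Hypothesis~(2) is exhibited by rescalings $t\rho$ of a fixed strictly convex defining function $\rho$ of $\Omega$: by the homogeneity $H_{n+k}(t\rho) = t^{n+k+1} H_{n+k}(\rho)$ and $p+1 > n+k+1$, one can pick $t_0 \ll 1 \ll t_1$ so that both endpoints satisfy $\mathcal J_\epsilon < c_1$ uniformly in $\epsilon$. Theorem~\ref{thm: supercritical} then delivers $u_\epsilon \in C^\infty(\overline\Omega)$ solving $(u_\epsilon^\star)^k \det D^2 u_\epsilon = (\epsilon - u_\epsilon)^p$ with $\mathcal J_\epsilon(u_\epsilon) \geq c_1 > 0$.

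Passing to the limit requires uniform $L^\infty$ control. The upper bound $\|u_\epsilon\|_{L^\infty} \leq K$ follows from the Claim inside the proof of Theorem~\ref{thm: supercritical}, using a uniform upper bound on the critical values $d_\epsilon$ (obtained by evaluating $\sup_{t \in [t_0,t_1]} \mathcal J_\epsilon(t\rho)$ on the fixed path, which is uniformly bounded in $\epsilon$) together with the uniform growth~(3). The non-degeneracy $\|u_\epsilon\|_{L^\infty} \geq \delta > 0$, and hence $|u_\epsilon(0)| \geq \delta'$ by convexity (needed to invoke Theorem~\ref{thm: elliptic estimates}), follows from $\mathcal J_\epsilon(u_\epsilon) \geq c_1 > 0$, since $u_\epsilon \to 0$ uniformly would force $\mathcal J_\epsilon(u_\epsilon) \to 0$. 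Theorem~\ref{thm: elliptic estimates} then delivers uniform interior $C^{l,\alpha}_{\mathrm{loc}}(\Omega)$ bounds, and the boundary $C^{2,\alpha}$ regularity for the limiting equation, whose right-hand side $|u|^p$ degenerates only on $\partial\Omega$, follows from the Savin and Le--Savin theory exactly as in the proof of Corollary~\ref{cor: subcritical}. A subsequential limit $u \in C^{2,\alpha}(\overline\Omega) \cap C^\infty(\Omega)$ is the desired non-trivial convex solution of \eqref{eq: degen-dirichlet-problem}. The main obstacle is the uniform-in-$\epsilon$ mountain-pass geometry: decoupling $\epsilon$ from the barrier radius $\sigma$ in~(1) and controlling $d_\epsilon$ from above require care, but once these uniform bounds are in hand the rest parallels the subcritical case.
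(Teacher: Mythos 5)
Your proposal follows essentially the same route as the paper's proof: regularize with $F_\epsilon(x,u) = (\epsilon-u)^p$, verify the mountain-pass hypotheses of Theorem~\ref{thm: supercritical} with control uniform in $\epsilon$ via the Sobolev inequality, extract uniform two-sided $L^\infty$ bounds on $u_\epsilon$ from the positivity and boundedness of the critical values, and pass to the limit using Theorem~\ref{thm: elliptic estimates} and the Savin/Le--Savin boundary regularity theory. Your treatment is in fact somewhat more explicit than the paper's -- you spell out the verification of hypothesis~(3), note that hypothesis~(4) need not be uniform in $\epsilon$ since it only enters through existence of the flow, and flag the need for a uniform upper bound on the mountain-pass levels $d_\epsilon$ -- but these are expository refinements of the same argument rather than a different approach.
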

\begin{proof}
Let 
\[\mathcal J_{\eps}(u)=H_{n+k}(u)-\frac{1}{p+1}\int_\Omega((\eps-u)^{p+1}-\eps^{p+1}),\]
then by the \ref{ineq: sobolev-ineq}, we have
\begin{equation}\label{eq: supercrit-lower-bdd}
\mathcal J_{\eps}(u)\geq a\|u\|_{L^{\infty}}^{n+k+1}-A\|u\|_{L^{\infty}}^{p+1}-O(\eps)\|u\|_{L^{\infty}}\end{equation}
and therefore it's easy to check that $F_{\eps}(x, u) = (\eps-u)^{p}$ satisfies the assumptions of Theorem~\ref{thm: supercritical} with $\mathcal J(u)> \frac{a}{2}(\frac{p-n-k}{p+1})\left(\frac{(n+k+1)a}{(p+1)A}\right)^{\frac{n+k+1}{p-n-k}}$ for $\|u\|_{L^{\infty}} = \left(\frac{(n+k+1)a}{(p+1)A}\right)^{\frac{1}{p-n-k}}$. Theorem~\ref{thm: supercritical} allows us to solve the equations
\[(u_{\eps}^{\star})^k\det D^2u_{\eps} = (\eps-u_{\eps})^p\]
for $p>n+k$, and moreover by equation~\eqref{eq: supercrit-lower-bdd}, we have 
$\mathcal J_{\eps}(u_{\eps}) \geq c>0$, and since 
\[\mathcal J_{\eps}(u_{\eps}) = \int_\Omega\frac{(-u_{\eps})(\eps-u_{\eps})^p}{n+k+1}-\frac{(\eps-u_{\eps})^{p+1}-\eps^{p+1}}{p+1}, \]
this gives us uniform bounds on $\|u_{\eps}\|_{L^{\infty}}$ both above and below. By the estimates in Theorem~\ref{thm: elliptic estimates}, $u_{\eps}$ must converge along some subsequence to some non-zero $u\in C^{\infty}(\Omega)\cap C^{0, 1}(\overline\Omega)$ which is a solution of \eqref{eq: degen-dirichlet-problem}. By the arguments of \cite[Theorem 1.3]{Savin} and \cite[Theorem 1.2]{Le-Savin} we get $u\in C^{2, \alpha}(\overline\Omega)$. 
\end{proof}

\subsection{An eigenvalue problem}
Now we treat the scale-invariant equation as an eigenvalue problem. As before, we will assume that $n+k\geq 0$, and consider the equation, 
\begin{equation}\label{eq: eigenvalue-problem}
\begin{cases}
(u^{\star})^k\det D^2u = \lambda |u|^{n+k} &\text{ in }\Omega\\
u=0 &\text{ on }\partial\Omega.
\end{cases}
\end{equation} 

\begin{theorem}[Existence and uniqueness of eigenfunctions]\label{thm: eigenvalue-problem}
	There exist $u\in C^{2, \alpha}(\overline\Omega)\cap C^{\infty}(\Omega)$ which is a solution to \eqref{eq: eigenvalue-problem} for $\lambda = \underline\lambda_{n+k+1}$. Moreover, if $(\lambda', u')$ is another pair of such solution, then $\lambda' = \lambda$ and there exist $c>0$ such that $u = cu'$. 
\end{theorem}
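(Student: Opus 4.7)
The plan is to take the limit $p \nearrow n+k$ of the subcritical solutions from Corollary~\ref{cor: subcritical}. Fix a sequence $p_j \nearrow n+k$, and let $u_j \in C^{2,\alpha}(\overline\Omega) \cap C^\infty(\Omega)$ be the unique solution of \eqref{eq: degen-dirichlet-problem} with $p = p_j$. Optimizing $\mathcal J_{p_j}$ along each ray $\{tu : t > 0\}$ shows that minimizing $\mathcal J_{p_j}$ over $\mathcal C_0$ is equivalent to minimizing the scale-invariant quotient
\[S_p(u) := \frac{H_{n+k}(u)}{\bigl(\int_\Omega |u|^{p+1}\bigr)^{(n+k+1)/(p+1)}},\]
so $S_{p_j}(u_j) = \inf_{\mathcal C_0} S_{p_j}$. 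A two-sided sandwich identifies this infimum: the upper bound $S_{p_j}(w) \to H_{n+k}(w)/\int|w|^{n+k+1}$ holds for any fixed $w \in \mathcal C_0$, and the lower bound follows from H\"older's inequality $\bigl(\int|u|^{p_j+1}\bigr)^{(n+k+1)/(p_j+1)} \leq |\Omega|^{(n+k-p_j)/(p_j+1)} \int|u|^{n+k+1}$ combined with Theorem~\ref{thm: sobolev-inequality}, together yielding $(n+k+1)\inf S_{p_j} \to \underline\lambda_{n+k+1}$. Next, normalize $v_j := u_j/\|u_j\|_{L^\infty}$, so $\|v_j\|_{L^\infty}=1$ and $v_j$ solves
\[(v_j^\star)^k \det D^2 v_j = \lambda_j |v_j|^{p_j}, \qquad \lambda_j := \|u_j\|_{L^\infty}^{p_j - (n+k)}.\]
Multiplying by $-v_j$ and integrating yields the identity $\lambda_j = (n+k+1) S_{p_j}(u_j)\bigl(\int|v_j|^{p_j+1}\bigr)^{(n+k-p_j)/(p_j+1)}$. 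The standard convexity bound $|v_j(x)| \geq d(x,\partial\Omega)/\diam(\Omega)$ (from $\|v_j\|_{L^\infty}=1$) keeps $\int|v_j|^{p_j+1}$ pinched between positive constants independent of $j$, and since $(n+k-p_j)/(p_j+1) \to 0$ this gives $\lambda_j \to \underline\lambda_{n+k+1}$. The normalizations $\|v_j\|_{L^\infty}=1$ and $|v_j(0)| \geq c > 0$ together with the boundedness of $\lambda_j$ satisfy the hypotheses of Theorem~\ref{thm: elliptic estimates}(2), delivering uniform interior $C^{l,\alpha}$ bounds; the Savin and Le-Savin results cited in Corollary~\ref{cor: subcritical} upgrade these to $C^{2,\alpha}(\overline\Omega)$ bounds. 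Extracting a convergent subsequence produces the desired eigenfunction $v_\infty$.

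\textbf{Uniqueness.} I would mimic the proof of Theorem~\ref{thm: uniqueness}. Suppose $(\lambda_1, u_1)$ and $(\lambda_2, u_2)$ are two eigenpairs with $\lambda_1 \leq \lambda_2$. If $|u_1| \not\leq |u_2|$, set $t_0 := \sup\{t > 0 : t|u_1| \leq |u_2|\} \in (0, 1)$ and $\tilde u_1 := t_0 u_1$; by the scale-invariance of the eigenvalue equation, $\tilde u_1$ satisfies $(\tilde u_1^\star)^k \det D^2 \tilde u_1 = \lambda_1 |\tilde u_1|^{n+k}$, which since $|\tilde u_1| \leq |u_2|$ and $\lambda_1 \leq \lambda_2$ is bounded above by $(u_2^\star)^k \det D^2 u_2$. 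The strong maximum principle and the Hopf-lemma-with-barrier construction from the proof of Theorem~\ref{thm: uniqueness} (needed because $\det D^2 \tilde u_1$ may vanish on $\partial\Omega$) then force $\tilde u_1 \equiv u_2$. Plugging back into the equations yields $\lambda_1 |u_2|^{n+k} \equiv \lambda_2 |u_2|^{n+k}$, so $\lambda_1 = \lambda_2$ and $u_2 = t_0 u_1$; the remaining case $|u_1| \leq |u_2|$ is symmetric.

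The main obstacle is the quantitative convergence $\lambda_j \to \underline\lambda_{n+k+1}$: compactness alone would only deliver an eigenpair with an a priori unknown eigenvalue, and the identification with $\underline\lambda_{n+k+1}$ requires both the H\"older--Sobolev sandwich $\inf S_{p_j} \to \underline\lambda_{n+k+1}/(n+k+1)$ and the convexity lower bound on $\int|v_j|^{p_j+1}$. By contrast, the uniqueness step is actually cleaner in this critical scale-invariant setting than in Theorem~\ref{thm: uniqueness}, since the absence of strict inequality in $F(x,tu) = t^{n+k} F(x,u)$ is exactly what delivers uniqueness up to scaling rather than the rigidity used in the subcritical uniqueness theorem.
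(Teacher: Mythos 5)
Your existence argument is a genuinely different route from the paper's. Where you take the limit $p_j \nearrow n+k$ of the subcritical minimizers from Corollary~\ref{cor: subcritical}, the paper runs a continuity-type argument on the family $(u^\star)^k\det D^2 u=(1-su)^{n+k}$, shows the solvable interval is $[0,\tilde s)$ with $\tilde s$ pinned between $\underline\lambda_{n+k+1}^{1/(n+k)}$ and the blow-up threshold, and rescales $u_s/\|u_s\|_{L^\infty}$ as $s\to\tilde s$. Your H\"older--Sobolev sandwich identifying $(n+k+1)\inf S_{p_j}\to\underline\lambda_{n+k+1}$, together with the convexity lower bound on $\int|v_j|^{p_j+1}$ and the exponent $(n+k-p_j)/(p_j+1)\to 0$, is a clean and correct way to pin down the limiting eigenvalue; the paper reaches the same identification by squeezing $\tilde s^{n+k}$ between $\underline\lambda_{n+k+1}$ from Claim~4.3 and from the definition~\eqref{eq: def-of-lambda-n+k+1}. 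Both approaches then invoke the same interior estimates of Theorem~\ref{thm: elliptic estimates} plus the Savin and Le--Savin boundary regularity, and the uniqueness step is the same in spirit (rescale by $t_0$ to produce a touching point, then strong maximum principle plus the modified Hopf lemma).

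There is one genuine gap: your argument requires $n+k>0$, since the subcritical range $0<p<n+k$ is empty when $n+k=0$, and Theorem~\ref{thm: eigenvalue-problem} is stated for $n+k\ge 0$. The paper treats $k=-n$ separately by taking a limit of the eigenfunctions for $k=-n+\eps$ as $\eps\to 0$; you would need to add a corresponding argument, or explicitly restrict your claim to $n+k>0$ (which is all that Theorem~\ref{thm: main-thm} actually needs).

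Two smaller points. First, you write ``uniform interior $C^{l,\alpha}$ bounds; the Savin and Le-Savin results \ldots upgrade these to $C^{2,\alpha}(\overline\Omega)$ bounds,'' but what is actually needed (and what the paper does) is to extract a uniform limit $v_\infty$ from the $C^1(\overline\Omega)\cap C^\infty_{\text{loc}}(\Omega)$ bounds and then apply Savin/Le--Savin to $v_\infty$ itself, not to claim a uniform $C^{2,\alpha}(\overline\Omega)$ bound along the sequence. Second, in the uniqueness step the case split on $|u_1|\not\leq|u_2|$ is unnecessary and the ``remaining case is symmetric'' remark is not quite right, since the normalization $\lambda_1\leq\lambda_2$ already breaks the symmetry. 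The clean fix is simply to always take $t_0:=\sup\{t>0: t|u_1|\leq|u_2|\}$ (finite and positive by the linear vanishing of $u_1,u_2$ at $\partial\Omega$), set $\tilde u_1=t_0 u_1$, and run the comparison: then $(\tilde u_1^\star)^k\det D^2\tilde u_1=\lambda_1|\tilde u_1|^{n+k}\leq\lambda_2|u_2|^{n+k}=(u_2^\star)^k\det D^2 u_2$ with $u_2\leq\tilde u_1\leq 0$ and touching, so the maximum principle gives $\tilde u_1\equiv u_2$ in one stroke, regardless of whether $t_0$ is above or below $1$.
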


\begin{proof}
	We first consider the case when $n+k>0$. Consider the  family of equations depending on a parameter $s>0$. 
	\begin{equation}\label{eq: s-regularized}
	\begin{cases}
	(u^{\star})^k\det D^2u = (1-s u)^{n+k} &\text{ in }\Omega\\
	u=0 &\text{ on }\partial\Omega.
	\end{cases}
	\end{equation} 
	and define the set $I\subset \mathbb R_{\geq 0}$ by
	\[I :=\{s\geq 0: \eqref{eq: s-regularized}\text{ admits a solution}\} \]
	For $s\in I$, we'll denote $u_{s}$ a solution of \eqref{eq: s-regularized}. It clear that if $s'\in I$, then $u_{s'}$ is a subsolution of \eqref{eq: s-regularized} for all $s<s'$, therefore $I$ must be an interval starting from $0$. Also if $s'\in I$, then $2u_{s}$ is a subsolution of \eqref{eq: s-regularized} for all $s<s'+\eps$ for some $\eps>0$, therefore $I$ is open. It follows that either $I = \mathbb R_{\geq 0}$, or $I = [0, \tilde s)$ for some $\tilde s>0$. 
	
	\begin{claim}\label{claim: def-of-I}
		$I = [0, \tilde s)$, and $\tilde s \leq \underline\lambda_{n+k+1}^{\frac{1}{n+k}}$. 
	\end{claim}
	\begin{proof}
		Given any $s<\tilde{s}$, let $u$ be the solution to \eqref{eq: s-regularized}. Then for $p<n+k$, consider the equations 
		\begin{equation}\label{eq: s-regularized-p}
		\begin{cases}
		(u^{\star})^k\det D^2u = (1-s u)^{p} &\text{ in }\Omega\\
		u=0 &\text{ on }\partial\Omega.
		\end{cases}
		\end{equation} 
		By Theorem~\ref{thm: subcritical}, \eqref{eq: s-regularized-p} admits a unique solution $u_p$ which moreover minimizes the functional 
		\[\mathcal J_p(u):= H_{n+k}(u)-\frac{s^{-1}}{p+1}\int_\Omega(1-s u)^{p+1}. \]
		Also since $u$ is a subsolution to \eqref{eq: s-regularized-p}, by \cite[Theorem 7.1]{CNS} and Theorem~\ref{thm: uniqueness}, we know that $|u_p|\leq |u|$ is uniformly bounded, therefore by the estimates \ref{thm: estimate-parabolic}, it follows that 
		\[\inf \mathcal J_p = \mathcal J_p(u_p)\geq -C\] 
		is uniformly bounded away from $-\infty$. This implies that the functional
		\[\mathcal J_{n+k}(u):= H_{n+k}(u)-\frac{s^{-1}}{n+k+1}\int_\Omega(1-s u)^{n+k+1} \]
		is bounded below as well. However, we know that
		\[ H_{n+k}(u)-\frac{s^{n+k}}{n+k+1}\int_\Omega|u|^{n+k+1}\geq \inf J_{n+k}>-\infty\]
		which implies $s^{n+k}\leq \underline{\lambda}_{n+k+1}$, which proves the claim. 
	\end{proof}
	
	As $s\to \tilde{s}$, we claim that $\sup_{\Omega}|u_{s}|$ must tend to $\infty$. For otherwise by \ref{eq: parabolic-equation-with-boundary}, we can extract a subsequence that converges to a function $u_{\tilde s}$, which is a solution of \eqref{eq: s-regularized}, which contradicts the definition of $\tilde s$. Now we consider the sequence of rescaled solutions
	\[v_{s} := \frac{u_{s}}{\|u_{s}\|}.\]
	Then $v_{s}$ satisfy the equations
	\begin{equation}
	\begin{cases}
	\det D^2v_{s} = (\|u_{s}\|_{L^{\infty}}^{-1}-s v_{s})^{n+k}(u_{s}^{\star})^{-k} &\text{ in }\Omega\\
	v_{s} = 0&\text{ on }\partial \Omega. \end{cases}
	\end{equation}
	and $\|u_{s}\|_{C^0} = 1$. By the first part of Theorem~\ref{thm: elliptic estimates}, we get a $C^1$ bound $\|u_{s}\|_{C^1}\leq C$, and we can extract a subsequence of $u_{s}$ that converges to uniformly a limit $u$, by the interior estimates in Theorem~\ref{thm: elliptic estimates}, $u$ solves the equation~\eqref{eq: eigenvalue-problem} in $\Omega$ with $\lambda =\tilde s^{n+k}$. By the arguments of \cite[Theorem 1.3]{Savin} and \cite[Theorem 1.2]{Le-Savin} we get $u\in C^{2, \alpha}(\overline\Omega)$ for some $\alpha>0$. This proves the existence claim when $k>-n$. By \eqref{eq: def-of-lambda-n+k+1}, we must have that $\lambda =\tilde s^{n+k}\geq \underline\lambda_{n+k+1}$ which together with Claim~\ref{claim: def-of-I}, gives that $\lambda = \underline\lambda_{n+k+1}$. 
	
	In the case when $k = -n$, let $u_{\eps}$ be the solution of the eigenvalue problem \eqref{eq: eigenvalue-problem} for $k = -n+\eps$, normalized so that $\|u_{\eps}\|_{C^0} = 1$. It's clear that $\lambda_{\eps}\leq C$ uniformly in $\eps$, hence $\det D^2u_{\eps}\leq C(1+|\nabla u_{\eps}|)^n$, and by Theorem~\ref{thm: elliptic estimates}, we have a uniform gradient estimate $\|u_{\eps}\|_{C^1}\leq C$. Hence we can extract a subsequence that converge uniformly to a convex function $u$, and the same arguments as above shows $u\in C^{2, \alpha}(\overline\Omega)\cap C^{\infty}(\Omega)$ for some $\alpha>0$. In fact, in this case the Schauder estimates implies that $u\in C^{\infty}(\overline\Omega)$.

	The only thing left to prove is uniqueness. Suppose that there exist two pairs $(\lambda_1, u_1)$ and $(\lambda_2, u_2)$ which solves \eqref{eq: eigenvalue-problem}. Without loss of generality, we can assume that $\lambda_1\leq\lambda_2$, hence 
	\begin{align}
	0&\leq (\det D^2u_2)^{\frac 1 n}-(\det D^2u_1)^{\frac 1 n}
	-\lambda_1^{1/n}\left(\frac{(-u_2)^{\frac{n+k}{n}}}{(u_2^{\star})^{\frac{k}{n}}}-\frac{(-u_1)^{\frac{n+k}{n}}}{(u_1^{\star})^{\frac{k}{n}}}\right) \\
	& \leq \frac{1}{n}(\det D^2u_1)^{\frac{1}{n}}u_1^{ij}(u_2-u_1)_{ij}
	-\lambda_1^{1/n}\left(\frac{(-u_2)^{\frac{n+k}{n}}}{(u_2^{\star})^{\frac{k}{n}}}-\frac{(-u_1)^{\frac{n+k}{n}}}{(u_1^{\star})^{\frac{k}{n}}}\right). 
	\end{align}
	where the second line follows from the concavity of $M \mapsto (\det M)^{\frac 1 n}$. 
	Notice that any scaling $(\lambda_2, tu_2)$ for $t>0$ also solves \eqref{eq: eigenvalue-problem}, therefore we can replace $u_2$ by one of it's scalings $t_0u_2$ where 
	\[t_0 =\sup\{t>0: tu_2<u_1 \text{ in }\Omega\}. \]
	After replacing $u_2$ by $t_0u_2$, we have $u_2\leq u_1$ and either there exist $x_0\in \Omega$ for which $u_2(x_0) = u_1(x_0)$, or there exist $x_0\in \partial \Omega$ for which $\langle x, \nabla(u_2-u_1)\rangle(x_0) = 0$. If the first case is true, then by the strong maximum principle $u_1\equiv u_2$ and we are done. In the second case, we can apply the Hopf lemma as in Theorem~\ref{thm: uniqueness}. We've proven the Theorem. 
\end{proof}
We have essentially proven Theorem~\ref{thm: main-thm}. 
\begin{proof}[proof of Theorem~\ref{thm: main-thm}]
	This follows from Corollary~\ref{cor: subcritical}, Corollary~\ref{cor: supercritical} and Theorem~\ref{thm: eigenvalue-problem} and by scaling. 
\end{proof}

\section{Relationship to an optimal transport problem}\label{section: tranformation}
In this section, we describe a transformation that transforms the solution of the Dirichlet problems \eqref{eq: main-equations} to that of a solution of a second boundary value problem, which has an interpretation of an optimal transport problem. This transformation arises from the duality of affine spheres \cite{Calabi}. (see also \cite[Section 5]{Klartag} for a nice exposition.) We now describe the transformation below. 

Suppose $u\in \mathcal C_0$, then let us denote RGraph$(u)$ to be {\bf radial graph} of the function $\frac{1}{-u}$,
\[\mathbb R^n\times \mathbb R_+\supset \text{RGraph}(u) := \left\{\frac{1}{-u(x)}(x, 1): x\in \Omega\right\}. \]
In this setting, RGraph$(u)$ is actually the graph of a convex function $\varphi: \mathbb R^n\to \mathbb R_+$, and moreover we have the relationship $\nabla\varphi(\mathbb R^n) = \Omega^{\circ}$. (see \cite[Section 5]{Klartag}) The following proposition describes the relationship between the equation satisfied by $u$ and the equation satisfied by $\varphi$. 
\begin{prop}\label{prop: tranform-equation}
	Let $u$ and $\varphi$ be described by above, then we have
\begin{equation}\label{eq: u-phi-relation}
	\frac{\det D^2u(x)}{(u^{\star})^{n+2}(x)} = \varphi^{n+2}\left(\frac{x}{-u(x)}\right)\det D^2\varphi\left(\frac{x}{-u(x)}\right). 
\end{equation}
\end{prop}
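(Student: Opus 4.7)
The plan is to verify \eqref{eq: u-phi-relation} by direct computation, using the change of variables $T:x\mapsto y=x/(-u(x))$ and $\varphi(y)=1/(-u(x))$. Since $\det D^2\varphi(y)$ is the Jacobian of the gradient map $y\mapsto\nabla\varphi(y)$, I will first express $\nabla\varphi$ as a function of $x$, then compute the Jacobian of that $x$-dependent map, and finally multiply by the Jacobian of the inverse change of variables $x=x(y)$.

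\emph{Step 1: Jacobian of $T$.} A direct differentiation gives $\partial y_i/\partial x_j = A_{ij}/u^2$ with $A_{ij}=-u\,\delta_{ij}+x_iu_j$. The matrix $A=(-u)I+x\otimes\nabla u$ is a rank-one perturbation of $(-u)I$, so the matrix-determinant lemma yields $\det A=(-u)^{n-1}\bigl(-u+\langle x,\nabla u\rangle\bigr)=(-u)^{n-1}u^{\star}$, and hence $\det(\partial y/\partial x)=u^{\star}/(-u)^{n+1}$.

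\emph{Step 2: Formula for $\nabla\varphi(y)$.} Differentiating the identity $\varphi(y(x))=-1/u(x)$ and applying the chain rule gives $\sum_i\varphi_i\,A_{ij}=u_j$. Inverting $A$ via Sherman--Morrison produces $A^{-1}=(-u)^{-1}\bigl(I-x(\nabla u)^T/u^{\star}\bigr)$, and after simplification one obtains the clean formula
\[
\nabla\varphi(y)=\frac{\nabla u(x)}{u^{\star}(x)}.
\]

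\emph{Step 3: Jacobian of the $x$-dependent map $W:=\nabla u/u^{\star}$.} Using $(u^{\star})_i=(D^2u\cdot x)_i$, one computes
\[
\frac{\partial W_j}{\partial x_i}=\frac{1}{u^{\star}}(D^2u)_{ik}\Bigl(\delta_{kj}-\frac{x_ku_j}{u^{\star}}\Bigr)=\frac{1}{u^{\star}}(D^2u)_{ik}N_{kj}.
\]
Again $N=I-x\otimes\nabla u/u^{\star}$ is a rank-one perturbation of $I$, so the matrix-determinant lemma gives $\det N=1-\langle x,\nabla u\rangle/u^{\star}=-u/u^{\star}$. Therefore $\det(\partial W/\partial x)=-u\,\det D^2u/(u^{\star})^{n+1}$.

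\emph{Step 4: Assembly.} By the chain rule $\det D^2\varphi(y)=\det(\partial W/\partial x)\cdot\det(\partial x/\partial y)$. Combining the Jacobians from Steps 1 and 3 (noting $\det(\partial x/\partial y)=(-u)^{n+1}/u^{\star}$) gives $\det D^2\varphi(y)=(-u)^{n+2}\det D^2u/(u^{\star})^{n+2}$. Substituting $(-u)=1/\varphi(y)$ and rearranging produces exactly \eqref{eq: u-phi-relation}. The only real difficulty is bookkeeping: two clean applications of the matrix-determinant lemma and careful sign tracking (particularly in Step 2, where the identity $u^{\star}-\langle x,\nabla u\rangle=-u$ collapses what looks like a messy expression into $\nabla u/u^{\star}$). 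Everything else is a mechanical chain-rule computation.
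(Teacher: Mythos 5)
Your proposal is correct and follows essentially the same path as the paper's proof: both first derive the key identity $\nabla\varphi(y)=\nabla u(x)/u^{\star}(x)$, then compute the Hessian of $\varphi$ via the chain rule and take determinants. The only cosmetic difference is that you separate the Jacobian of the change of variables and the Jacobian of $W=\nabla u/u^{\star}$ into two standalone determinant computations (each handled neatly via the matrix-determinant lemma), whereas the paper writes $D^2\varphi$ as a single matrix product $\frac{-u}{u^{\star}}M_1M_2$ and takes the determinant at the end; both versions are the same calculation organized differently.
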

\begin{proof}
Let $y = \frac{x}{-u(x)}$ be coordinates on $\mathbb R^n$, then by the definition of $\varphi$, we have $\varphi(y) = \frac{1}{-u(x)}$ and $x = \frac{y}{\varphi(y)}$. Then by differentiating, we get
\[\frac{\partial x_i}{\partial y_j} = \frac{\delta_{ij}}{\varphi}-\frac{y_i}{\varphi^2}\frac{\partial \varphi}{\partial y_j} = \frac{\delta_{ij}}{\varphi}-\frac{x_i}{\varphi}\frac{\partial \varphi}{\partial y_j}  =-u(\delta_{ij}-x_i\frac{\partial \varphi}{\partial y_j}),\]
and by chain rule, we get
\[\frac{\partial \varphi}{\partial y_j} = \frac{\partial \varphi}{\partial x_i}\frac{\partial x_i}{\partial y_j} = \frac{u_i}{u}(x_i\frac{\partial \varphi}{\partial y_j}-\delta_{ij}),\]
which implies 
\[\frac{\partial \varphi}{\partial y_j} = -\frac{u_j}{u-\langle x, \nabla u\rangle} = \frac{u_j}{u^{\star}}.\]
taking another derivative and applying chain rule, we get
\begin{align}
\frac{\partial^2 \varphi}{\partial y_i\partial y_j} = \frac{-u}{u^{\star}}\left(u_{jk}-\frac{u_jx_mu_{mk}}{u^{\star}}\right)\left(\delta_{ki}-x_k\frac{u_i}{u^{\star}}\right)
\end{align}
taking a determinant gives what we desired. 
\end{proof}
\begin{remark}
It is known that the graph of a function $u$ is an elliptic affine sphere with center at the origin if and only if $u$ satisfies
\[\frac{\det D^2u}{(u^{\star})^{n+2}} = c\]
Therefore from Proposition~\ref{prop: tranform-equation} and this characterization of affine spheres, we see that Graph$(u)$ is a elliptic affine sphere if and only if the graph of the Legendre tranform of $\varphi$ is also an elliptic affine sphere. This is essentially the duality of affine sphere described in \cite{Calabi}. 
\end{remark}
More generally, we can see that if $u$ is a solution of \eqref{eq: main-equations}, then by \eqref{eq: u-phi-relation}, $\varphi:\mathbb R^n\to \mathbb R$ solves the second boundary value problem. 
\begin{equation}\label{eq: second boundary problem}
\begin{cases}
\det D^2\varphi = \lambda\frac{(-\varphi^{\star})^{n+2+k}}{\varphi^{n+2+p}}\\
\nabla\varphi(\mathbb R^n) = \Omega^{\circ}. 
\end{cases}
\end{equation} 

\begin{remark}
In the case $k =-n-2$, the numerator on the right-hand-side of equation~\eqref{eq: second boundary problem} does not appear. In this case, equation~\eqref{eq: second boundary problem} was introduced and solved by Klartag in \cite{Klartag}, who proved the existence and uniqueness of solutions up to translations. It is in general not known whether Klartag's solutions can be transformed to a solution of a Dirichlet problem \eqref{eq: main-equations}. 
\end{remark}

Based on the above discussion, an immediate consequence of our existence results for the Dirichlet problem \eqref{eq: main-equations} is the following
\begin{prop}
	Let $n+k>0$, then the second boundary value problem \eqref{eq: second boundary problem} admits a nontrivial solution for all $p\neq n+k$. For $p=n+k$, \eqref{eq: second boundary problem}  admits a solution for $\lambda = \underline\lambda_{n+k+1}$. 
\end{prop}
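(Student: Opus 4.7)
The plan is to apply the transformation described in Proposition~\ref{prop: tranform-equation} to the solutions of the Dirichlet problem \eqref{eq: main-equations} already furnished by Theorem~\ref{thm: main-thm}. In each of the three regimes ($0<p<n+k$, $p=n+k$ with $\lambda=\underline\lambda_{n+k+1}$, and $p>n+k$), Theorem~\ref{thm: main-thm} provides a strictly convex solution $u\in C^{2,\alpha}(\overline\Omega)\cap C^\infty(\Omega)$. Let $\varphi:\mathbb R^n\to\mathbb R_+$ be the convex function whose graph is $\mathrm{RGraph}(u)$; explicitly, $\varphi(y)=\tfrac{1}{-u(x)}$ where $y=\tfrac{x}{-u(x)}$. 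The boundary condition $\nabla\varphi(\mathbb R^n)=\Omega^{\circ}$ is a general feature of the radial graph construction (cited just before Proposition~\ref{prop: tranform-equation}, see also Klartag \cite{Klartag}), and the map $x\mapsto y$ is a diffeomorphism of $\Omega$ onto $\mathbb R^n$ because $u$ vanishes on $\partial\Omega$, sending $|y|\to\infty$ as $x\to\partial\Omega$.

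The verification of the PDE in \eqref{eq: second boundary problem} is a direct computation using Proposition~\ref{prop: tranform-equation} together with an identification of $\varphi^{\star}$ in terms of $u^{\star}$. From the formula $\tfrac{\partial\varphi}{\partial y_j}=\tfrac{u_j}{u^{\star}}$ derived in the proof of Proposition~\ref{prop: tranform-equation}, we obtain
\[
\langle y,\nabla\varphi(y)\rangle \;=\; \sum_j \frac{x_j}{-u}\cdot\frac{u_j}{u^{\star}} \;=\; \frac{\langle x,\nabla u\rangle}{-u\cdot u^{\star}}\;=\;\frac{u^{\star}+u}{-u\cdot u^{\star}},
\]
so that
\[
\varphi^{\star}(y)\;=\;\langle y,\nabla\varphi(y)\rangle-\varphi(y)\;=\;\frac{u^{\star}+u}{-u\cdot u^{\star}}-\frac{1}{-u}\;=\;-\frac{1}{u^{\star}(x)}.
\]
In particular $\varphi^{\star}<0$, $u^{\star}=1/(-\varphi^{\star})$, and $-u=1/\varphi$. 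Combining the relation
\[
\frac{\det D^2 u(x)}{(u^{\star})^{n+2}(x)} \;=\; \varphi^{n+2}(y)\det D^2\varphi(y)
\]
from Proposition~\ref{prop: tranform-equation} with the equation $\det D^2u=\lambda(u^{\star})^{-k}(-u)^p$ satisfied by $u$, we find
\[
\varphi^{n+2}\det D^2\varphi \;=\; \lambda\,(u^{\star})^{-n-2-k}(-u)^p\;=\;\lambda\,(-\varphi^{\star})^{n+2+k}\,\varphi^{-p},
\]
which, after dividing by $\varphi^{n+2}$, is precisely the equation in \eqref{eq: second boundary problem}.

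The main subtlety is not the algebra but rather making sure the eigenvalue statement transfers correctly: the $\lambda$ appearing in the solution of \eqref{eq: main-equations} is the same $\lambda$ that appears in \eqref{eq: second boundary problem}, and in the critical case $p=n+k$ Theorem~\ref{thm: main-thm} forces $\lambda=\underline\lambda_{n+k+1}$. For the subcritical and supercritical cases, scaling the solution $u\mapsto cu$ of \eqref{eq: main-equations} multiplies $\lambda$ by $c^{n+k-p}\neq 1$, so any $\lambda>0$ can be realized, confirming solvability of \eqref{eq: second boundary problem} for every $p\neq n+k$. Strictness and smoothness of $\varphi$ follow from the corresponding properties of $u$ together with the fact that the change of variables $x\mapsto y$ is a smooth diffeomorphism $\Omega\to\mathbb R^n$.
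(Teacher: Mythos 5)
Your proposal is correct and takes essentially the same route as the paper, whose proof is the one-line remark that the statement ``follows from Theorem~\ref{thm: main-thm} and the discussion above.'' The only added value in your write-up is that you make explicit the identity $\varphi^{\star}(y)=-1/u^{\star}(x)$, which the paper uses implicitly when passing from \eqref{eq: u-phi-relation} to \eqref{eq: second boundary problem}; this computation is correct and worth recording.
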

\begin{proof}
	This follows from Theorem~\ref{thm: main-thm} and the discussion above. 
\end{proof}

\begin{remark}
More generally, \eqref{eq: second boundary problem} is equivalent to the condition \[\nabla\varphi_{\star}\left(\frac{\lambda dx}{\varphi^{n+2+p}(x)}\right)  = \frac{dy}{(-\varphi^{\star}(y))^{n+2+k}}. \] From this we see that $\nabla\varphi$ solves an optimal tranport problem from $\mathbb R^n$ to $\Omega^{\circ}$, where the domain is
\[d\mu = \frac{\lambda dx}{\varphi^{n+2+p}(x)}\] 
and target measure is
\[d\nu = \frac{dy}{(-\varphi^{\star}(y))^{n+2+k}}. \]
Hence $\varphi$ has an interesting interpretation as the solution of an optimal transport problem whose domain and target measure are coupled to the potential $\varphi$ and its Legendre transform $\varphi^{\star}$. 
\end{remark}

\paragraph{Acknowledgements:} We would like to thank John Loftin for useful discussions and for pointing us to the paper \cite{Klartag}. F.T. would like to thank D.H. Phong for his interest and encouragement.

\Addresses

\begin{thebibliography}{}
\bibitem{Bakelman}Bakelman, I. Ja. {\em A variational problem related to the Monge-Amp\`ere equation.} Dokl. Akad. Nauk SSSR 141 1961 1011–1014.

\bibitem{Bakelman2}Bakelman, I. Ja. {\em Variational problems and elliptic Monge-Ampère equations.} J. Differential Geom. 18 (1983), no. 4, 669–699 (1984).

\bibitem{CNS}Caffarelli, L., Nirenberg, L., Spruck, J. {\em The Dirichlet problem for nonlinear second-order elliptic equations. I. Monge-Amp\`ere equation. }Comm. Pure Appl. Math. 37 (1984), no. 3, 369–402. 

\bibitem{Calabi}Calabi, E. {\em Complete affine hyperspheres. I.} Symposia Mathematica, Vol. X (Convegno di Geometria Differenziale, INDAM, Rome, 1971 $\&$ Convegno di Analisi Numerica, INDAM, Rome, 1972) pp. 19–38. Academic Press, London, 1972. 


\bibitem{Chen-Huang}Chen, H., Huang, G. {\em Existence and regularity of the solutions of some singular Monge-Amp\`ere equations.} J. Differential Equations 267 (2019), no. 2, 866–878. 

\bibitem{Cheng-Yau2}Cheng, S.-Y., Yau, S.-T. {\em On the regularity of the solution of the n-dimensional Minkowski Problem.} Comm. Pure Appl. Math. 29 (1976), no. 5, 495–516. 

\bibitem{Cheng-Yau}Cheng, S.-Y., Yau, S.-T. {\em On the regularity of the Monge-Amp\`ere equation $\det(\partial^2u/\partial x_i\partial x_j)=F(x,u)$.} Comm. Pure Appl. Math. 30 (1977), no. 1, 41–68.

\bibitem{Cheng-Yau4}Cheng, S.-Y., Yau, S.-T. {\em The real Monge-Ampère equation and affine flat structures.} Proceedings of the 1980 Beijing Symposium on Differential Geometry and Differential Equations, Vol. 1, 2, 3 (Beijing, 1980), 339–370, Sci. Press Beijing, Beijing, 1982. 

\bibitem{Cheng-Yau3}Cheng, S.-Y., Yau, S.-T. {\em Complete affine hypersurfaces. I. The completeness of affine metrics.} Comm. Pure Appl. Math. 39 (1986), no. 6, 839–866. 

\bibitem{Chou-Wang}Chou, K.-S., Wang, X.-J. {\em A variational theory of the Hessian equation.} Comm. Pure Appl. Math. 54 (2001), no. 9, 1029–1064. 

\bibitem{Collins-Li}Collins, T.C., Li, Y. {\em Complete Calabi-Yau metrics in the complement of two divisors.} arXiv:2203.10656

\bibitem{Evans}Evans, C. L. {\em Classical solutions of fully nonlinear, convex, second-order elliptic equations.} Comm. Pure Appl. Math. 35 (1982), no. 3, 333–363.

\bibitem{Gilbarg-Tru} Gilbarg, D., Trudinger, N. {\em Elliptic partial differential equations of second order.} Reprint of the 1998 edition. Classics in Mathematics. Springer-Verlag, Berlin, 2001. xiv+517 pp. ISBN: 3-540-41160-7. 

\bibitem{Guo-Phong-Tong}Guo, B., Phong, D.H., Tong, F. {\em  On $L^{\infty}$ estimates for complex Monge-Ampère equations.} To appear in Annals of Mathematics. 



\bibitem{Klartag}Klartag, B. {\em Affine hemispheres of elliptic type.} St. Petersburg Math. J. 29 (2018), no. 1, 107–138. 

\bibitem{Krylov}Krylov, N.V. {\em Bounded inhomogeneous elliptic and parabolic equations.} Izv. Akad. Nauk SSSR Ser. Mat. 46 (1982), no. 3, 487–523, 670. 

\bibitem{Krylov2}Krylov, N.V. {\em Estimates for derivatives of the solution of nonlinear parabolic equations.} Dokl. Akad. Nauk SSSR 274 (1984), no. 1, 23–26. 


\bibitem{Le-Savin}Le, N.Q., Savin, O. {\em Schauder estimates for degenerate Monge-Amp\`ere equations and smoothness of the eigenfunctions.} Invent. Math. 207 (2017), no. 1, 389–423. 

\bibitem{Lions}Lions, P.-L. {\em Two remarks on Monge-Amp\`ere equations.} Ann. Mat. Pura Appl. (4) 142 (1985), 263–275 (1986).


\bibitem{Nirenberg}Nirenberg, L. {\em The Weyl and Minkowski problems in differential geometry in the large.} Comm. Pure Appl. Math. 6 (1953), 337–394.

\bibitem{Pogorelov2}Pogorelov, A. {\em The regularity of the generalized solutions of the equation $\det(\partial^2u/\partial x_i\partial x_j)= \varphi(x_1,x_2,…,x_n)>0$.} (Russian) Dokl. Akad. Nauk SSSR 200 (1971), 534–537. 

\bibitem{Pogorelov}Pogorelov, A. {\em The Minkowski multidimensional problem.} Translated from the Russian by Vladimir Oliker. Introduction by Louis Nirenberg. Scripta Series in Mathematics. V. H. Winston \& Sons, Washington, D.C.; Halsted Press [John Wiley \& Sons], New York-Toronto-London, 1978. 106 pp. ISBN: 0-470-99358-8 

\bibitem{Savin}Savin, O. {\em A localization theorem and boundary regularity for a class of degenerate Monge Amp\`ere equations.} J. Differential Equations 256 (2014), no. 2, 327–388.

\bibitem{Tso} Tso, K. {\em On a real Monge-Amp\`ere functional. } Invent. Math. 101 (1990), no. 2, 425–448.

\bibitem{Wang}Wang, X.-J. {\em A class of fully nonlinear elliptic equations and related functionals.} Indiana Univ. Math. J. 43 (1994), no. 1, 25–54.

\bibitem{Yau}Yau, S.-T. {\em On the Ricci curvature of a compact Kähler manifold and the complex Monge-Amp\`ere equation. I.} Comm. Pure Appl. Math. 31 (1978), no. 3, 339–411.

\end{thebibliography}
\end{document}